\newcommand{\PP}{P}
\newcommand{\lle}{\, \, {\lesssim}\, \, }
\newcommand{\eps}{\varepsilon}
\newcommand{\RR}{\mathbb{R}}
\newcommand{\NN}{\mathbb{N}}
\newcommand{\HHH}{\mathbb{H}}
\newcommand{\qv}[1]{\left< #1 \right>}
\newcommand{\BBB}{\mathbb{B}}
\newcommand{\given}{\, |\, }
\newcommand{\isum}{i_1^2+\dots+i_d^2}
\newtheorem{thm}{Theorem}[section]
\newtheorem*{condition}{Condition}
\newtheorem{lem}{Lemma}[section]
\theoremstyle{definition}
\newtheorem{ex}[thm]{Example}
\begin{document}

\begin{frontmatter}

\title{Estimating a smooth function on a large graph by Bayesian Laplacian regularisation}

\runtitle{Bayesian function estimation on graphs}

\author{Alisa Kirichenko and Harry van Zanten}
\footnote{Korteweg-de Vries Institute for Mathematics, Science Park 107, 1098 XG Amsterdam, The Netherlands. 
 Email: a.kirichenko@uva.nl, hvzanten@uva.nl. }
 \footnote{Research funded by the
Netherlands Organization for Scientific Research (NWO).}

%\date{\today, revised version}

\runauthor{Kirichenko and Van Zanten}

\maketitle

\bigskip

\begin{abstract}
We study a Bayesian approach to estimating a smooth function 
 in the context of regression or classification problems on large graphs. 
We derive theoretical results that show how asymptotically optimal Bayesian 
regularisation can be achieved under an asymptotic shape assumption on the 
underlying graph and a smoothness condition on the target function, both 
formulated in terms of the graph Laplacian. The priors we study
are randomly scaled Gaussians with precision operators involving the 
Laplacian of the graph.

\end{abstract}

\end{frontmatter}

\numberwithin{equation}{section}

\section{Introduction}
\subsection{Learning a smooth function on a large graph}

There are various problems arising in modern statistics that involve 
making inference about a ``smooth'' function on a large graph. 
The underlying graph structure in such problems can have different origins. 
Sometimes it is given by the context of the problem. This is typically the case, for instance, 
in the problem
of making inference on protein interaction networks (e.g.\ \cite{sharan}) or in image interpolation problems (\cite{liu}).
In other cases the graph is deduced from the data in a preliminary step, as
is the case with similarity graphs in label propagation methods (e.g.\ \cite{zhu2002learning}). 
Moreover, the different problems that arise in applications 
can have all kinds of different particular features. 
For example, the available data can be indexed by the vertices or by the 
edges of the graph, or both. 
Also, in some applications only partial data are available, for instance only part of 
the vertices are labeled (semi-supervised problems). 
Moreover, both regression problems and classification problems arise naturally 
in different applications. 

Despite all these different aspects, many of these problems and the methods that 
have been developed to deal with them have a number of important features in common. 
In many cases the graph is relatively ``large'' and the function of interest can be viewed
as ``smoothly varying'' over the graph. Consequently, most of the proposed
methods view the problem as a high-dimensional or nonparametric estimation problem
and employ some regularisation or penalization technique that takes the geometry 
of the graph into account and that is thought to produce an appropriate bias-variance trade-off.

In this paper we set up the mathematical framework that allows us 
to study the performance of nonparametric function estimation methods on large graphs.
We do not treat all the variants exhaustively, instead we 
consider two prototypical problems: regression, where the function of interest $f$ 
is a function on the vertices of the graph that is observed with additive noise, 
and binary classification, where a label $0$ or $1$ is observed at each vertex
and the object of interest is the ``soft label'' function $f$ whose value at a vertex $v$ 
is the probability of seeing a $1$ at $v$.
We assume the underlying graph is ``large'', in the sense that it has $n$ vertices 
for some ``large'' $n$. Our theoretical results deal with the situation that this 
number $n$ tends to infinity. Although for finite $n$ the graph has a fixed size 
and we essentially just have to estimate a Euclidean vector in $\RR^n$, it is 
useful to view the problem as high-dimensional or even nonparametric.

Despite the finite structure, it is intuitively clear that the ``smoothness'' of $f$, defined in 
a suitable manner, will have an impact on the difficulty of the problem and on 
the results that can be attained. 
Indeed, consider the extreme case of $f$ being a constant function. Then estimating
$f$ reduces to estimating a single real number. In the regression setting, for instance, 
this means that under mild conditions the sample mean gives a $\sqrt{n}$-consistent 
estimator. In the other extreme case of a completely unrestricted function there is 
no way of making any useful inference. At best we can say that in view of the 
James-Stein effect we should employ some degree of shrinking or regularisation. 
However, if no further assumptions are made, nothing can be said about consistency or rates. 
We are interested in the question what we should do in the intermediate 
situation that $f$ has some ``smoothness'' between these two extremes.

Another aspect that will have a crucial impact on the problem, in addition to the regularity of $f$, 
 is the geometry of the graph. Indeed, regular grids of different dimensions 
are special cases of the graphs we shall consider, and we know from existing theory that the best attainable 
rates for estimating a smooth function on a grid depends on the dimension 
of the grid. 
More generally, the geometry of the graph will influence the 
complexity of the spaces of ``smooth'' functions on the graph, and hence the 
performance of statistical or learning methods.

\subsection{Laplacian regularisation}

Several approaches to learning functions on graphs that have been explored in the literature 
involve regularisation using the Laplacian matrix associated with the graph (see, for example, \cite{belkin2004}, \cite{smola}, 
\cite{Hein2006}, \cite{ando}, \cite{zhu}, \cite{huang}). The graph Laplacian is 
defined as $L = D - A$, where $A$ is the adjacency matrix of the graph and 
$D$ is the diagonal matrix with the degrees of the vertices on the diagonal. 
When viewed as a linear operator, the Laplacian acts on a 
 function $f$ on the graph as
 \begin{equation}\label{eq: l}
 L f (i) = \sum_{j \sim i} \Big(f(i) - f(j)\Big), 
 \end{equation}
where we write $i \sim j$ if vertices $i$ and $j$ are connected by an edge. 
Several related operators are routinely employed as well, for
instance, the normalized Laplacian $\tilde L = D^{-1/2}L D^{-1/2}$. We will continue to 
work with $L$ in this paper, but much of the story goes through if $L$ 
is replaced by such a related operator, after minor adaptations. 

For a function $f$ on the graph the Laplacian norm is given by 
$\sum_{j\sim i} (f(i) - f(j))^2$.
Clearly, the Laplacian norm of $f$ quantifies how much the function $f$ varies
when moving along the edges of the graph. Therefore, several papers have 
proposed regularisation or penalization using this norm, as well as generalizations 
involving powers of the Laplacian or other functions, for instance, exponential ones. See, 
for example, \cite{belkin2004} or \cite{smola} and the references therein. 
There exist only few papers that study theoretical aspects of the performance of such 
methods. 
We mention, for example, \cite{belkin2004}, in which a theoretical analysis of a Tikhonov regularisation method 
is conducted in terms of algorithmic stability. 
\cite{johnson2007effectiveness} consider sub-sampling schemes for estimating 
a function on a graph.

The existing papers have different viewpoints than ours and do not study how the performance depends 
on (the combination of) graph geometry and function regularity. 
Our aim is to develop a framework which makes such a theoretical study of 
Laplacian regularisation methods possible and to 
derive some first asymptotic results that exhibit methods that perform well
from the point of view of convergence rates and adaptation to regularity.

\subsection{Bayesian regularisation}

We investigate Bayesian regularisation approaches, 
where we consider two types of priors on functions on graphs.
The first type performs regularisation using 
a power of the Laplacian. This can be seen as the graph analogue 
of Sobolev norm regularisation of functions on ``ordinary'' 
Euclidean spaces. The second type of priors 
uses an exponential function of the Laplacian. This can be 
viewed as the analogue of the popular squared exponential 
prior on functions on Euclidean space or its extension
to manifolds, as studied by \cite{castillo2014thomas}. 
In both cases we consider hierarchical priors with 
the aim of achieving automatic adaptation to the regularity 
of the function of interest.

To assess the performance of our Bayes procedures we take an asymptotic
perspective. We let the number of vertices of the graph grow and ask 
at what rate the posterior distribution concentrates around the unknown 
function $f$ that generates the data. We make two kinds of assumptions. 
Firstly, we assume that $f$ has a certain degree of regularity $\beta$, 
defined in suitable manner. The smoothness $\beta$ is not 
assumed to be known though, we are aiming at deriving adaptive results. 

Secondly, we make an assumption 
on the asymptotic shape of the graph. In recent years, various theories of graph limits have been 
developed. Most prominent is the concept of the graphon, e.g. \cite{lovasz_limits} or 
 the book of \cite{lovasz}. More recently this notion has been extended in various directions, 
 see, for instance, \cite{borgs} and \cite{chung}.
 However, the existing approaches are not immediately suited in the situations we have in mind, 
which involve graphs that are sparse in nature and are ``grid-like'' in some sense. 
Therefore we take an alternative approach and describe the asymptotic shape 
of the graph through a condition on the asymptotic behaviour of the 
spectrum of the Laplacian. To be able to derive concrete results 
we essentially assume that the smallest eigenvalues $\lambda_{n,i}$ of $L$ satisfy 
\begin{equation}\label{eq: c11}
\lambda_{n,i}^2 \asymp \Big(\frac{i}{n}\Big)^{2/r}
\end{equation}
for some $r \ge 1$\footnote{We write $a_n \asymp b_n$ if $0 < \liminf a_n/b_n \le \limsup a_n/b_n < \infty$.}. 
Very roughly speaking, this 
means that asymptotically, or ``from a distance'', the graph looks like 
an $r$-dimensional grid with $n$ vertices. 
As we shall see, the actual grids are special cases (see Example \ref{ex: grids}), 
hence our results include the usual 
statements for regression and classification on these classical design spaces. 
However, the setting is much more general, since it is really only the {\em asymptotic} shape
that matters. For instance, a $2$ by $n/2$ ladder graph asymptotically also looks 
like a path graph, and indeed we will see that it satisfies our assumption for $r=1$ as well
(Example \ref{ex: ladder}). 
Moreover, the constant $r$ in \eqref{eq: c11} does not need to be 
a natural number. We will see, for example, at least numerically, that there are graphs whose geometry 
is asymptotically like that of a grid of non-integer ``dimension'' $r$ 
in the sense of condition \eqref{eq: c11}. 

We stress that we do not assume the existence of a ``limiting manifold'' for the graph as $n \to \infty$. 
We formulate our conditions and results purely in terms of intrinsic properties of the graph, 
without first embedding it in an ambient space. 
In certain cases in which limiting manifolds do exist (e.g.\ the regular grid cases) our 
type of asymptotics can be seen as ``infill asymptotics'' (\cite{cressie1993statistics}). 
For a simple illustration, see Example \ref{ex: pathgraph}. 
 However, in applied settings (see, for instance, Example \ref{ex: protein}) it is typically not
clear what a suitable ambient manifold could be, which is why we choose to avoid 
this issue altogether.

In the recent paper \cite{Jarno} the theoretical results we present in this paper
are investigated numerically and serve as a guideline for the tuning of practical Bayesian 
regularisation methods. Several concrete examples  are considered, 
both for simulated data and for real data problems.

\subsection{Organisation}
The rest of the paper is organized as follows. 
In the next section we present our geometry assumption and give 
examples of graphs that satisfy it, either theoretically or numerically.
In Section \ref{general_result} we introduce two families of priors on functions 
on graphs. We present theorems that quantify the amount of mass that the priors
put on neighbourhoods of ``smooth'' functions and quantify
the complexity of the priors in terms of metric entropy.
Section \ref{sec: proofs} contains the proofs of these general results 
and in Section \ref{sec: stat} they are used to derive theorems 
about posterior contraction in nonparametric regression and binary classification.
We end with some concluding remarks in Section \ref{sec: conc}.

\section{Asymptotic geometry assumption on graphs}
\label{deff}

In this section we formulate our geometry assumption on the underlying graph 
and give several examples.

\subsection{Graphs, Laplacians and functions on graphs}

Let $G$ be a connected, simple (i.e.\ no loops, multiple edges or weights), undirected graph with $n$ vertices labelled $1, \ldots, n$. 
Let $A$ be its adjacency matrix, i.\,e. $A_{ij}$ is $1$ or $0$ according 
to whether or not there is an edge between vertices $i$ and $j$. Let $D$ be the diagonal matrix 
with element $D_{ii}$ equal to the degree of vertex $i$. Let $L = D-A$ be the Laplacian of 
the graph. 
We note that strictly speaking, we will be considering sequences of graphs $G_n$
with Laplacians $L_n$ and we will let $n$ tend to infinity. However, in order to avoid cluttered notation, we will omit the subscript $n$ and just write $G$ and $L$ throughout.

A function $f$ on the (vertices of the) graph is simply a function $f: \{1, \ldots, n\} \to \RR$. 
Slightly abusing notation we will write $f$ both for the function and for 
the associated vector of function values $(f(1), f(2), \ldots, f(n))$ in $\RR^n$. 
We measure distances and norms of functions using the norm $\|\cdot\|_n$ defined by 
$\|f\|^2_n = n^{-1} \sum_{i=1}^n f^2(i)$.
The corresponding inner product of two functions $f$ and $g$ is denoted by 
\[
\qv{f,g}_n = \frac1n \sum_{i=1}^n f(i)g(i).
\]
Again, in our results $n$ will be varying, so when we speak of a function $f$ 
on the graph $G$ we are, strictly speaking, considering a sequence of functions $f_{n}$. 
Also, in this case the subscript $n$ will usually be omitted.

The Laplacian $L$ is positive semi-definite and symmetric. 
It easily follows from the definition that its smallest eigenvalue is $0$ (with 
eigenvector $(1, \ldots, 1)$). The fact that $G$ is connected implies 
that the second smallest eigenvalue, the so-called algebraic connectivity, is strictly 
positive (e.g.\ \cite{thebook}). We will denote the Laplacian eigenvalues, 
ordered my magnitude, by 
\[
0 = \lambda_{n, 0} < \lambda_{n,1} \le \lambda_{n,2} \le \cdots \le \lambda_{n,n-1}.
\]
Again we will usually drop the first index $n$ and just write $\lambda_{i}$ for $\lambda_{n, i}$. 
We fix a corresponding sequence of eigenfunctions $\psi_{i}$, orthonormal with respect to 
the inner product $\qv{\cdot, \cdot}_n$.

\subsection{Asymptotic geometry assumption}
\label{sec: geometry}

As mentioned in the introduction, we will derive results under an asymptotic shape 
assumption on the graph, formulated in terms of the Laplacian eigenvalues.
To motivate the definition we note that the $i$th eigenvalue of the Laplacian of an $n$-point grid 
of dimension $d$ behaves like $(i/n)^{2/d}$ (see Example \ref{ex: grids} ahead). 
We will work with the following condition.

\begin{condition}
We say that the {\em geometry condition is satisfied with parameter $r \ge 1$} 
if there exist $i_0 \in \NN$, $\kappa \in (0,1]$ and $C_1, C_2 > 0$ such that 
for all $n$ large enough, 
\[
C_1\Big(\frac i n\Big)^{2/r} \le \lambda_i \le C_2\Big(\frac i n\Big)^{2/r}, 
 \qquad \text{for all $i \in \{i_0, \ldots, \kappa n\}$}.
\]
\end{condition}

Note that this condition only restricts a positive fraction $\kappa$
of the Laplacian eigenvalues, namely the $\kappa n$ smallest ones. Moreover, 
we don't need restrictions on the first finitely many eigenvalues. 
%We stress that both $i_0$ and $\kappa$ may not depend on the graph size $n$. 
We remark that if the geometry condition is fulfilled, then by adapting the constant
$C_1$ we can ensure that the lower bound holds, in fact, for {\em all} $i \in \{i_0, \ldots, n\}$.
To see this, observe that for $n$ large enough and $\kappa n < i \le n$ we have 
\[
\lambda_i \ge \lambda_{\lfloor \kappa n\rfloor} \ge C_1\Big(\frac {\lfloor \kappa n\rfloor} n\Big)^{2/r}
\ge C_1\Big(\frac\kappa 2\Big)^{2/r}\Big(\frac i n\Big)^{2/r}.
\]
For the indices $i < i_0$ it is useful to note that we have a general lower bound 
on the first positive eigenvalue $\lambda_1$, hence on $\lambda_2, \ldots, \lambda_{i_0}$ as well. 
Indeed, by Theorem 4.2 of \cite{mohar2} we have 
\begin{equation}\label{eq: l1bound}
\lambda_1 \ge \frac{4}{n\, \text{diam}(G)} \ge \frac{4}{n^2}.
\end{equation}
Note that this bound also implies that our geometry assumption can not 
hold with a parameter $r < 1$, since that would lead to 
contradictory inequalities for $\lambda_{i_0}$.

We first confirm that 
the geometry condition is satisfied 
 for grids and tori of different dimensions.

\begin{ex}[Grids]
\label{ex: grids}
For $d \in \NN$, a regular $d$-dimensional grid with $n$ vertices 
can be obtained by taking the Cartesian product of $d$ path graphs 
with $n^{1/d}$ vertices (provided, of course, that this number is an integer). 
Using the known expression for the Laplacian eigenvalues of the path graph 
and the fact that the eigenvalues of products of graphs are the sums of the original 
eigenvalues, see, for instance, Theorem 3.5 of \cite{mohar}, we get that the Laplacian eigenvalues of the $d$-dimensional grid are 
given by 
\[
4\left(\sin^2{\frac{\pi i_1}{2n^{\frac 1 d}}}+ \dots + \sin^2{\frac{\pi i_d}{2n^{\frac 1 d}}}\right) \asymp \frac{\isum}{n^{2/d}}, 
\]
where $i_k= 0, 1,2, \ldots, n^{1/d}-1$ for every $k =1, \ldots, d$. 
By definition there are $i+1$ eigenvalues less or equal than the $i$th smallest eigenvalue 
$\lambda_i$. Hence, for a constant $c > 0$, we have: 
\[
i+1 = \sum_{i_1^2+ \cdots + i_d^2 \le c^2 n^{2/d}\lambda_i} 1.
\]
 The sum on the right gives the number of lattice points in a sphere of radius 
$R= c n^{1/d}\sqrt\lambda_i$ in $\RR^d$. For our purposes it suffices to use crude upper 
and lower bounds for this number.
By considering, for instance, the smallest hypercube
containing the sphere and the largest one inscribed in it, it is easily seen that the 
number of lattice points 
is bounded from above and below by a constant times $R^d$. We conclude that for the $d$-dimensional 
grid we have $\lambda_i \asymp (i/n)^{2/d}$ for every $i =0, \ldots, n-1$. 
In particular, the geometry condition is fulfilled with parameter $r = d$. 
\end{ex}

\begin{ex}[Discrete tori]

For graph tori we can follow the same line of reasoning as for grids.
A $d$-dimensional torus graph with $n$ vertices can be obtained as a 
product of $d$ ring graphs with $n^{1/d}$ vertices. Using the known explicit 
expression of the Laplacian eigenvalues of the ring we find that the 
$d$-dimensional torus graph satisfies the geometry conditions with parameter $r = d$ as well.
\end{ex}

The following lemma lists a number of operations
that can be carried out on a graph without 
loosing the geometry condition.

\begin{lem}
\label{plusminus}
Suppose that $G=G_n$ satisfies the geometry assumption with parameter $r$. Then 
the following graphs satisfy the condition with parameter $r$ as well:
\begin{enumerate}[(i)]
\item 
The cartesian product of $G$ with a connected simple graph $H$ with a finite number of vertices
(independent of $n$).
\item 
The graph obtained by augmenting $G$ with finitely many edges (independent of $n$), 
provided it is a simple graph.
\item 
The graph obtained from $G$ by deleting finitely many edges (independent of $n$), provided it is 
still connected.
\item 
The graph obtained by augmenting $G$ with finitely many vertices and edges (independent of $n$), 
provided it is a simple connected graph.
\end{enumerate}
\end{lem}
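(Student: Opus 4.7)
The plan is to reduce each of the four claims to a pair of two-sided inequalities on the Laplacian eigenvalues of the modified graph, and then invoke either the explicit spectrum formula for Cartesian products or a standard interlacing argument for low-rank perturbations. Throughout, write $L$ for the Laplacian of $G$ with eigenvalues $0 = \lambda_0 < \lambda_1 \le \cdots \le \lambda_{n-1}$. By the discussion preceding the lemma, the geometry condition for $G$ is equivalent (after possibly enlarging $i_0$ and the constants) to a two-sided bound $c_1 n x^{r/2} \le N(x) \le c_2 n x^{r/2}$ on the eigenvalue counting function $N(x) = \#\{i : \lambda_i \le x\}$, valid for $x$ in a range of the form $[\lambda_{i_0}, \lambda_{\lfloor \kappa n \rfloor}]$.

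For (i), let $H$ be a fixed connected simple graph on $k$ vertices with Laplacian eigenvalues $0 = \mu_0 < \mu_1 \le \cdots \le \mu_{k-1}$. By the spectrum formula for Cartesian products (Theorem~3.5 of \cite{mohar}), the Laplacian of $G \times H$ has the $nk$ eigenvalues $\{\lambda_i + \mu_j : 0 \le i \le n-1,\ 0 \le j \le k-1\}$, so its counting function is $\sum_{j=0}^{k-1} N(x - \mu_j)$. For $x \ge 2\mu_{k-1}$ each of the $k$ terms is of order $n x^{r/2}$; for smaller $x$ the sum is dominated by the $j=0$ contribution, which is already of order $n x^{r/2}$, and since $k$ is a fixed constant this is of order $nk\,x^{r/2}$ as well. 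Inverting the relation $N_{G \times H}(\sigma_m) \asymp nk\,\sigma_m^{r/2}$ yields $\sigma_m \asymp (m/(nk))^{2/r}$, i.e.\ the geometry condition with parameter $r$ for the $nk$-vertex graph $G \times H$ (which is connected since $G$ and $H$ are).

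For (ii) and (iii), adding or deleting $\ell$ edges perturbs $L$ by $\pm P$ with $P \succeq 0$ of rank at most $\ell$, since each affected edge $\{u,v\}$ contributes the rank-one PSD term $(e_u - e_v)(e_u - e_v)^T$. A short Courant--Fischer argument, restricting the minimax to the orthogonal complement of the range of $P$, gives the interlacing
\[
\lambda_{i-\ell}(L) \;\le\; \lambda_i(L \pm P) \;\le\; \lambda_{i+\ell}(L).
\]
Because $\ell$ is independent of $n$, the shift is harmless: for $i \in [2\ell,\, \kappa n - \ell]$ one has $(i \pm \ell)/n \asymp i/n$, so the geometry condition for $L$ transfers to $L \pm P$ with the same parameter $r$ (and possibly smaller $\kappa$ and enlarged $i_0$ and constants).

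Case (iv) requires the most care, because the vertex count changes from $n$ to $n + k$. Let $L'$ be the Laplacian of $G'$, and observe that the $n \times n$ principal submatrix of $L'$ indexed by $V(G)$ equals $L + M$, where $M$ is a nonnegative diagonal matrix whose $v$-th entry counts the new edges from $v \in V(G)$ to a new vertex; in particular $M \succeq 0$ has rank at most the number $\ell$ of added edges. Cauchy interlacing (removing $k$ rows and columns) gives $\lambda_i(L') \le \lambda_i(L+M) \le \lambda_{i+k}(L')$, and the argument from (ii) applied to $L + M$ gives $\lambda_{i-\ell}(L) \le \lambda_i(L+M) \le \lambda_{i+\ell}(L)$. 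Chaining these,
\[
\lambda_{i-k-\ell}(L) \;\le\; \lambda_i(L') \;\le\; \lambda_{i+\ell}(L),
\]
and since $k+\ell$ is a fixed constant while $n+k \asymp n$, the geometry condition on $L$ implies the geometry condition on $L'$ with parameter $r$. The hardest part is precisely this last piece: combining the two index shifts---one from Cauchy interlacing and one from the Weyl-type perturbation---and verifying that the window of indices on which the resulting two-sided bound has the right shape still covers a positive fraction of the $n+k$ eigenvalues, which boils down to the fact that both $k$ and $\ell$ are independent of $n$.
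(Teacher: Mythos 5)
Your proof is correct, and for parts (i)--(iii) it follows essentially the same route as the paper: part (i) rests on the Cartesian-product spectrum formula (the paper simply notes that, after shrinking $\kappa$, the relevant smallest eigenvalues of the product coincide with those of $G$, whereas you phrase the same fact through the eigenvalue counting function, which is a little more careful about the interleaving of $\lambda_i$ with $\lambda_i+\mu_j$ for $j\ge 1$); parts (ii) and (iii) use the same edge-interlacing inequality, which you rederive from Courant--Fischer via the rank-$\ell$ PSD perturbation $(e_u-e_v)(e_u-e_v)^T$ rather than citing it. Part (iv) is where you genuinely diverge: the paper passes through the disjoint union $G_v$ (whose spectrum is that of $G$ with an extra zero eigenvalue appended) and then applies edge interlacing once per added edge, obtaining $\lambda_{i-1}(G)\le\lambda_i(G')\le\lambda_i(G)$ directly; you instead combine Cauchy interlacing for the $n\times n$ principal submatrix of $L'$ with a Weyl-type bound for the diagonal perturbation $M$, arriving at $\lambda_{i-k-\ell}(L)\le\lambda_i(L')\le\lambda_{i+\ell}(L)$. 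Both mechanisms deliver the only thing that matters --- a two-sided eigenvalue bound with an index shift that is constant in $n$ --- so the conclusions agree; your version has the minor advantage of not requiring the graph-theoretic interlacing theorems as black boxes, at the cost of slightly looser index shifts, and it implicitly assumes the added edges run between old and new vertices (edges added among old vertices are covered by your case (ii), and this should be said explicitly when combining the cases).
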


\begin{proof}
(i). Say $H$ has $m$ vertices and let its
 Laplacian eigenvalues be denoted by $0=\mu_0, \ldots, \mu_m$. Then the
product graph has $mn$ vertices and it has Laplacian eigenvalues 
$\lambda_i+\mu_j$, $i=0, \dots, n-1, j=0, \dots, m-1$
(see Theorem 3.5 of \cite{mohar}). In particular, 
the first $n$ eigenvalues are the same as those of $G$. Hence, since $G$ satisfies the geometry condition, 
so does the product of $G$ and $H$. 

(ii) and (iii). 
These statements follow from the interlacing formula that asserts that 
if $G+e$ is the graph obtained by adding the edge $e$ to $G$, then 
\[
0\leq\lambda_1(G)\leq\lambda_1(G+e)\leq\lambda_2(G)\leq\dots\leq\lambda_{n-1}(G)\leq\lambda_{n-1}(G+e). 
\]
See, for example, Theorem 3.2 of \cite{mohar} or Theorem 7.1.5 of \cite{thebook}.

(iv). Let $v$ and $e$ be a vertex and an edge that we want to connect to $G$. 
Denote $G_v$ a disjoint union of $G$ and $v$, and by $G'$ the graph obtained by 
connecting edge $e$ to $v$ and an existing vertex of $G$. 
By Theorem 3.1 from \cite{mohar} we know that the eigenvalues of $G_v$ are $0, 0, \lambda_1(G), \lambda_2(G), \dots, \lambda_{n-1}(G). $ Using Theorem 3.2 of \cite{mohar} we see that $0=\lambda_0(G_v)=\lambda_0 (G')$ and
\[
0=\lambda_1(G_v)\leq \lambda_1(G')\leq \lambda_1(G)\leq\lambda_2(G_v)\leq\dots\leq\lambda_{n-1}(G)\leq\lambda_n(G'). 
\]
The result follows from this observation.
\end{proof}

\begin{ex}[Ladder graph]
\label{ex: ladder}

A ladder graph with $n$ vertices is the product of a path graph with $n/2$
vertices and a path graph with $2$ vertices. Hence, by part (i) of Lemma 
\ref{plusminus} and Example \ref{ex: grids} it satisfies the geometry 
condition with parameter $r=1$. 
\end{ex}

\begin{ex}[Lollipop graph]
The so-called lollipop graph $L_{m,n}$ is obtained by attaching a path graph
with $n$ vertices with an additional edge to a complete graph with $m$ vertices. 
If $m$ is constant, i.e.\ independent of $n$, then according to parts (ii) and 
(iv) of the preceding lemma this graph satisfies the geometry condition with $r=1$. 
\end{ex}

In the examples considered so far it is possible to verify theoretically 
that the geometry condition is fulfilled. In a concrete case in which 
the given graph is not of such a tractable type, numerical investigation 
of the Laplacian eigenvalues can give an indication as to whether or 
not the condition is reasonable and provide the appropriate value of the parameter $r$.
A possible approach is to plot $\log \lambda_i$ against $\log (i/n)$. If the
geometry condition is satisfied with parameter $r$, the $\kappa \times 100 \%$ 
left mosts points in this plot should approximately
lie on a straight line with slope $2/r$, except possibly a few on the very left.

Our focus in this paper is not on numerics, but it is illustrative 
to consider a few numerical examples in order to get a better idea of the types
of graphs that fit into our framework.

\begin{ex}[Two-dimensional grid, numerically]
Figure \ref{fig: grid} illustrates the suggested numerical approach for a two-dimensional, 
$20\times20$ grid. The dashed line in the left panel is fitted to the left-most $35\%$ of the points
in the plot, discarding the first three points on the left. 
In accordance with Example \ref{ex: grids} this line has slope $1.0$. 
\begin{figure}
\begin{center}
\includegraphics[scale=.6]{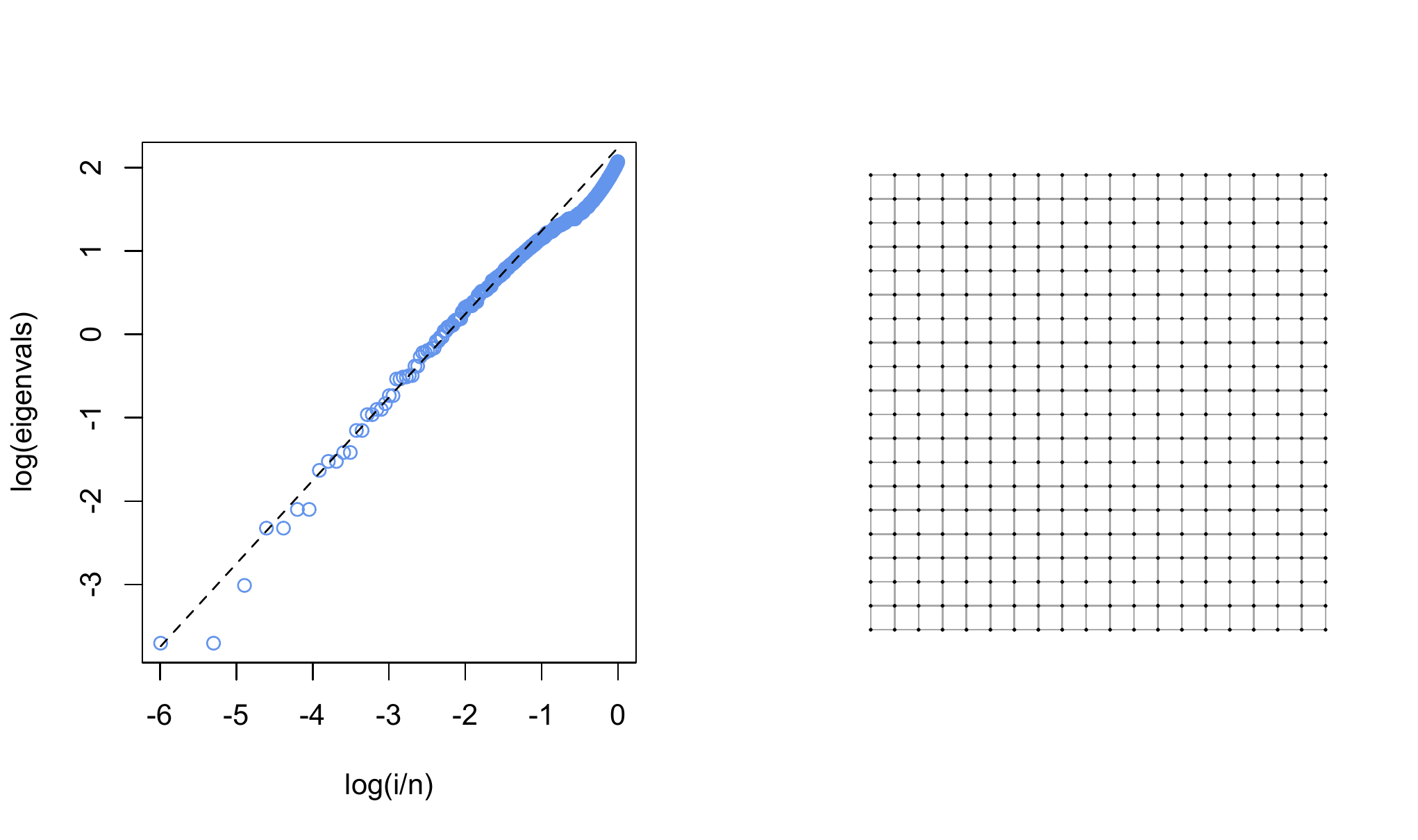}
\end{center}
\caption{Plot of $\log\lambda_i$ against $\log(i/n)$ for the $20\times20$ grid. Fitted
line has slope $1.0$, corresponding to $r=2.0$ in the geometry assumption.}
\label{fig: grid}
\end{figure}
\end{ex}

\begin{ex}[Watts-Strogatz `small world' graph]
In our second numerical example we consider a graph obtained as a realization 
from the well-known random graph model of \cite{watts1998collective}. Specifically, 
we consider in the first step a ring graph with $200$ vertices. In the 
next step every vertex is visited and the edges emanating from the vertex 
are rewired with probability $p=1/4$, meaning that with probability $1/4$ they are detached from the 
neighbour of the current vertex and attached to another vertex, chosen uniformly
at random. In the right panel of Figure \ref{fig: ws} a particular realization is
shown. Here we have only kept the largest connected component, which has $175$ vertices 
in this case. On the left we have exactly the same plot as described in the preceding example
for the grid case. The plot indicates that it is not unreasonable to assume that the geometry 
condition holds. The value of the parameter $r$ deduced from the slope
of the line equals $1.4$ for this graph.
\begin{figure}
\begin{center}
\includegraphics[scale=.6]{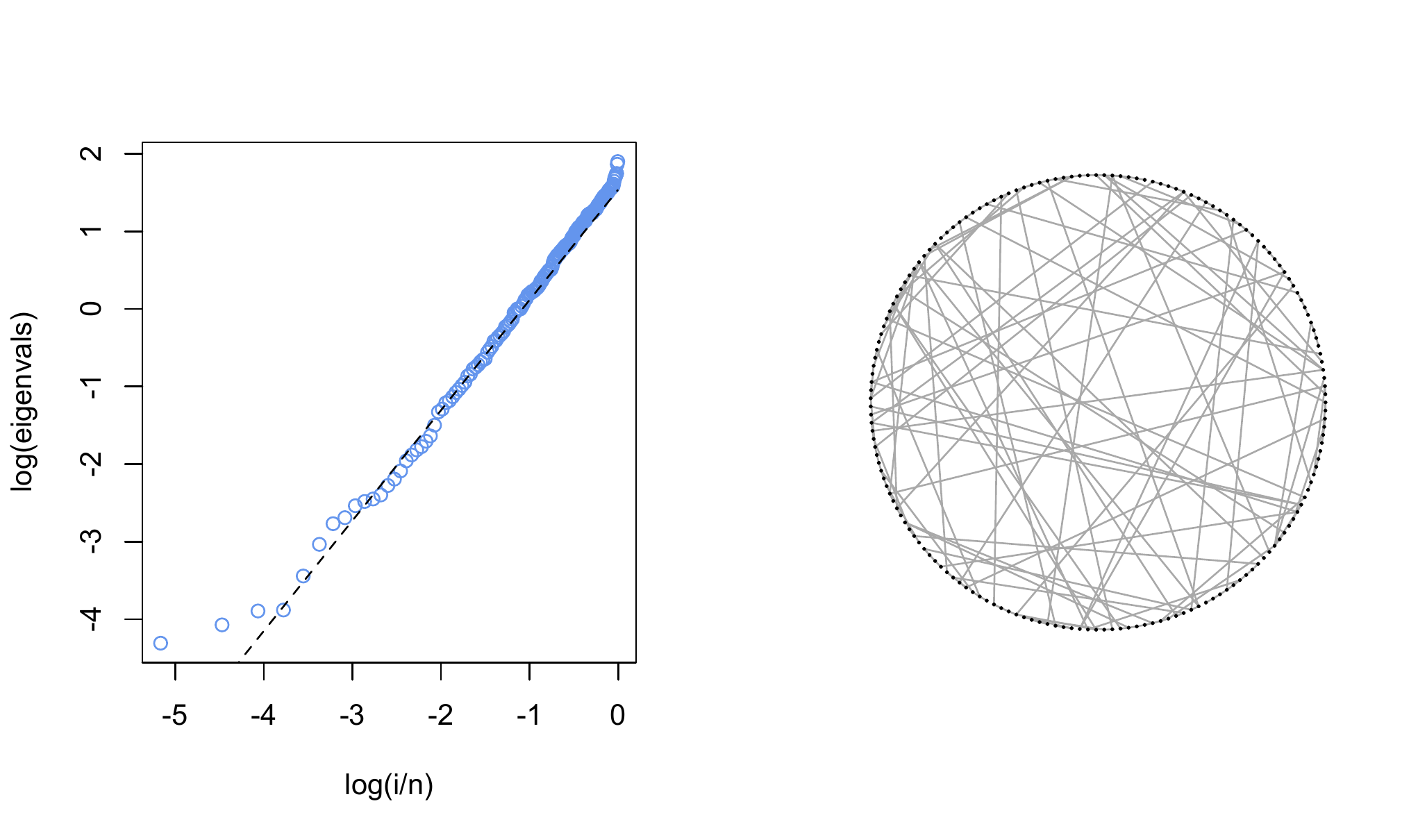}
\end{center}
\caption{Plot of $\log\lambda_i$ against $\log(i/n)$ for the Watts-Strogatz graph in the 
right panel. Fitted line has slope $1.42$, corresponding to $r=1.4$ in the geometry assumption.}
\label{fig: ws}
\end{figure}
\end{ex}

\begin{ex}[Protein interaction graph]\label{ex: protein}
In the final example we consider a graph obtained from the 
protein interaction graph of baker's yeast, as described
in detail in Section 8.5 of \cite{Kolaczyk}.
The graph, shown in the right panel 
of Figure \ref{fig: protein}, describes the interactions between proteins 
 involved in the communication between a cell and its surroundings.
Also for this graph it is true that with a few exceptions, the points corresponding
to the $35\%$ smallest eigenvalues lie approximately on a straight line. 
The same procedure as followed in the other examples gives a value $r = 2.1$
for the parameter in the geometry assumption.

\begin{figure}
\begin{center}
\includegraphics[scale=.6]{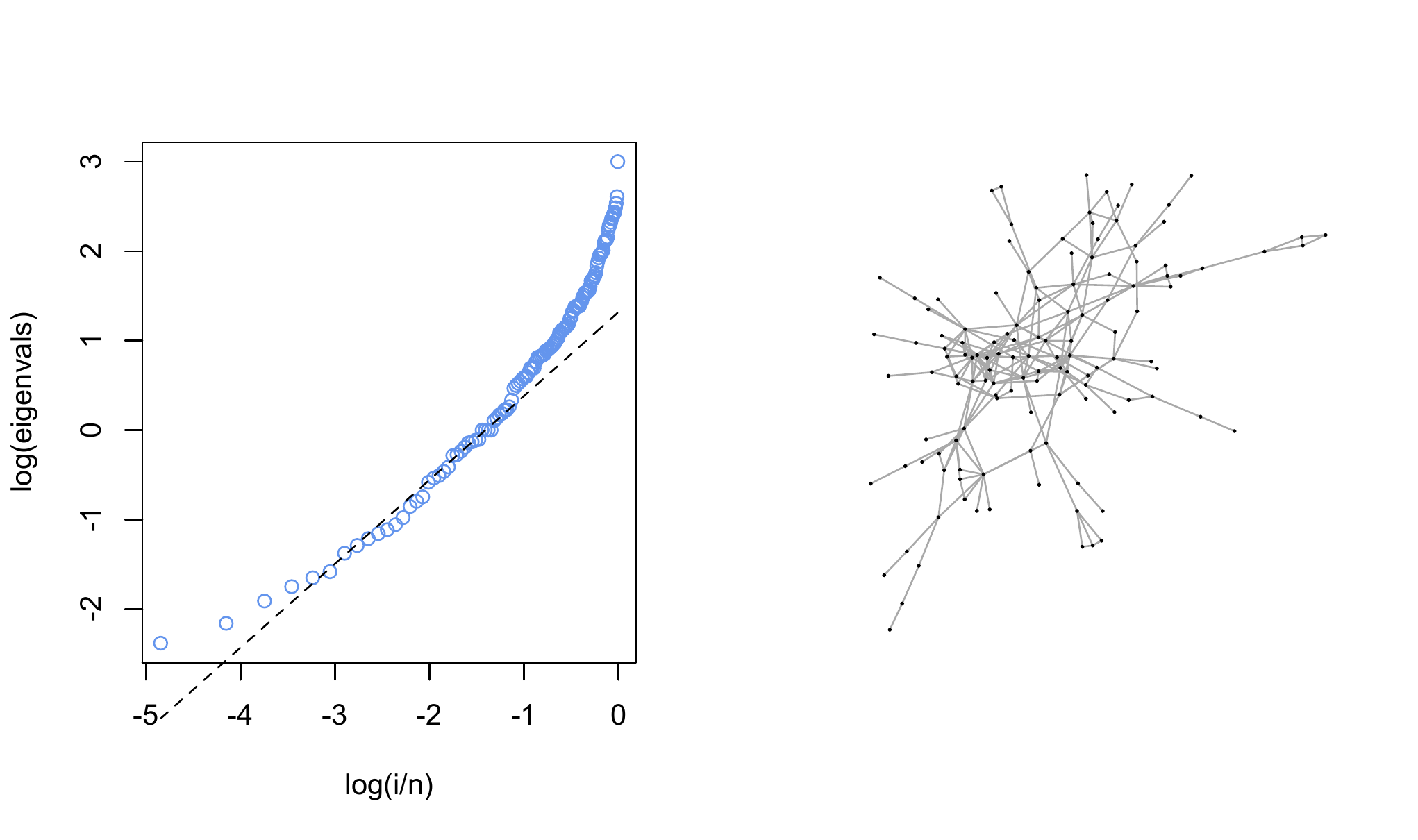}
\end{center}
\caption{Plot of $\log\lambda_i$ against $\log(i/n)$ for the Protein interaction graph in the 
right panel. Fitted line has slope $0.94$, corresponding to $r=2.1$ in the geometry assumption.}
\label{fig: protein}
\end{figure}

\end{ex}

% ws graph: line fitted to 50% smallest eigenvalues, has r = 1.5

% portein graph: line fitted to 30% smallest eigenvalues, then r = 2.5

\section{General results on prior concentration}
\label{general_result}

We consider two different priors on functions on graphs.
The first corresponds to regularisation using a power of the Laplacian, 
the second one uses an exponential function of the Laplacian. 
In this section we present two general results which quantify 
both the mass that these priors give to shrinking $\|\cdot\|_n$-neighbourhoods of a fixed 
function $f_0$, and the complexity of the support of the priors, 
measured in terms of metric entropy\footnote
{For $\eps>0$ and a norm $\|\cdot\|$ on a set $B$, we denote by $N(\eps, B, \|\cdot\|)$ 
the minimal number of balls of $\|\cdot\|$-radius $\eps$ needed to cover $B$. 
}.
In the next section we will combine these results with known results 
from Bayesian nonparametrics theory to deduce convergence rates and adaptation 
for nonparametric regression and classification problems on graphs.

Our results assume that the geometry condition holds for some $r \ge 1$. 
The mass a prior puts near $f_0$ will depend on the ``regularity'' of the function, defined
in a suitable manner. Specifically, we will assume it belongs to a Sobolev-type ball 
of the form 
\begin{equation}\label{eq: h}
H^\beta(C) = \Big\{f: \qv{f, (I+(n^\frac{2}{r} L)^\beta) f}_n \le C^2\Big\}
\end{equation}
for some $\beta, C > 0$ (independent of $n$). 
The particular normalisation, which depends on the 
geometry parameter $r$, ensures non-trivial asymptotics. 
This is confirmed in \cite{minimax}, in which  minimax lower
bounds are presented which complement the rate results of the present paper.

It is illustrative to consider the assumption in a bit more detail 
in the simple case of the path graph. The example shows in particular that 
we have chosen the ``correct'' normalisation in the definition of the smoothness class.

\begin{ex}[Path graph]
\label{ex: pathgraph}
Consider a path graph $G$ with $n$ vertices, which we identify 
with the points $i/n$ in the unit interval, $i = 1, \ldots, n$.
As seen in Example \ref{ex: grids}, this graph satisfies the 
geometry condition with parameter $r = 1$. 
Hence, in this case the collection of functions $H^\beta(C)$ is given by 
\[
H^\beta(C) = \Big\{f: \qv{f, (I+(n^2 L)^\beta) f}_n \le C^2\Big\}.
\]
To understand when a (sequence of) function(s) belongs to this space,
say for $\beta =1$, let $f_n$ be the restriction to the grid $\{i/n, i =1, \ldots n\}$
of a fixed function $f$ defined on the whole interval $[0,1]$. 
The assumption that $f_n \in H^1(C)$ then translates to the requirement that
\[
\frac1n\sum_i f^2(i/n) + n\sum_{i \sim j} (f(i/n)-f(j/n))^2 \le C^2.
\]
The first term on the left is a Riemann sum which approximates the integral 
$\int_0^1 f^2(x)\,dx$.
If $f$ is differentiable, then for the second term we have, for large $n$, 
\[
n\sum_{i \sim j} (f(i/n)-f(j/n))^2 = n\sum_{i=1}^{n-1} (f((i+1)/n)-f(i/n))^2
\approx \frac1n\sum_i (f'(i/n))^2,
\]
which is a Riemann sum that approximates the integral $\int_0^1 (f'(x))^2\,dx$.
Hence in this particular case the space of functions $H^1(C)$ on the graph is the 
natural discrete version of the usual Sobolev ball
\[
\Big\{f:[0,1] \to \RR: \int_0^1 (f^2(x)+f'^2(x))(x)\,dx \le C^2 \Big\}.
\]
Definition \eqref{eq: h} is a way of describing ``$\beta$-regular'' functions on a 
general graph satisfying the geometry condition, 
without assuming the graph or the function on it are discretised 
versions of some ``continuous limit''.
\end{ex}

The first family of priors we consider penalize the higher order Laplacian 
norm of the function of interest. This corresponds 
to using a Gaussian prior with a power of the Laplacian as precision matrix
(inverse covariance). 
(We note that since the Laplacian always has $0$ as an eigenvalue, 
it is not invertible. We remedy this by adding a small multiple of the identity matrix $I$ 
to $L$.) The larger the power of the Laplacian used, 
 the more ``rough'' functions on the graph are penalized. 
The power is regulated by a hyperparameter $\alpha > 0$ which can be seen as 
describing the ``baseline regularity'' of the prior. 
To enlarge the range of regularities for which we obtain good contraction
rates in the statistical results, we add a multiplicative hyperparameter which 
we endow with a suitable hyperprior. In \eqref{eq: c1} we assume an exact standard 
exponential distribution, but inspection of the proof shows that the range 
of priors for which the result holds is actually larger. To keep 
the exposition clean we omit these details.

\begin{thm}[Power of the Laplacian]
\label{main_theorem}
Suppose the geometry assumption holds for $r \ge 1$. 
Let $\alpha >0$ be fixed and assume that $f_0\in H^\beta(C)$ for some $C>0$ and $0<\beta\leq \alpha+r/2$. 
Let the random function $f$ on the graph be defined by %as in \eqref{eq: p1}.
\begin{align}
\label{eq: c1} c & \sim \text{Exp}(1)\\
\label{eq: f1} f \given c & \sim N(0, ( ((n/c)^{{2}/r}(L + n^{-2} I))^{\alpha+r/2})^{-1}).
\end{align}
Then there exists a constant $K_1>0$ and for all $K_2>1$ there exist Borel measurable subsets $B_n$ of 
$\RR^n$ such that for every sufficiently large $n$, 
\begin{align}
\PP(\|f-f_0\|_n<\eps_n) & \geq e^{-K_1 n\eps_n^2}, \label{prior_mass1} \\
\PP(f \notin B_n) & \le e^{-K_2 n\eps^2_n}, \label{remaining_mass1} \\
\log N(\eps_n, B_n, \|\cdot\|_n) & \leq n\eps^2_n \label{entropy1},
\end{align}
where $\eps_n$ is a multiple of $n^{-{\beta}/({2\beta+r})}$.
\end{thm}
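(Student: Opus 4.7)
The plan is to work in the Laplacian eigenbasis. Let $\psi_0,\dots,\psi_{n-1}$ be the $\qv{\cdot,\cdot}_n$-orthonormal eigenfunctions, and note that the prior \eqref{eq: f1} has the representation $f\mid c \isd c^{(2\alpha+r)/(2r)}\,W$, where $W=\sum_i W_i\psi_i$ with independent Gaussian coordinates $W_i\sim N(0,\sigma_i^2)$ and $\sigma_i^2=(n^{2/r}(\lambda_i+n^{-2}))^{-(\alpha+r/2)}$. The geometry assumption, supplemented by \eqref{eq: l1bound} and the observation immediately following the condition, gives the two-sided bound $\sigma_i^2\asymp (i\vee 1)^{-\gamma}$ uniformly in $i\in\{0,\dots,n-1\}$, where $\gamma:=(2\alpha+r)/r>1$. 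This reduces all three conclusions to estimates about a weighted Gaussian sequence, plus the scale factor $c^{\gamma/2}$.

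For the prior-mass bound \eqref{prior_mass1} I would apply the standard decentering (concentration-function) inequality conditionally on $c=c_*$ for a carefully chosen deterministic scale $c_*=c_*(n)$, and then multiply by $\PP(c\in[c_*,2c_*])$. The RKHS norm associated with $f\mid c$ is $\|h\|_{H_c}^2=c^{-\gamma}\sum_i h_i^2/\sigma_i^2$. Truncating $f_0$ at the spectral level $I_n\asymp\eps_n^{-r/\beta}$ and using that $f_0\in H^\beta(C)$ implies $\sum_i i^{2\beta/r}f_{0,i}^2\lle 1$, the bias is at most $\eps_n/2$ in $\|\cdot\|_n$, and an Abel-summation argument gives $\|f_0^{(I_n)}\|_{H_{c_*}}^2\lle c_*^{-\gamma}I_n^{2(\alpha-\beta)/r+1}$; the hypothesis $\beta\le\alpha+r/2$ is precisely what ensures that the exponent $2(\alpha-\beta)/r+1\ge 0$ and the polynomial bound is valid. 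Setting $c_*=n^\theta$ with $\theta=2r(\alpha-\beta)/((2\alpha+r)(2\beta+r))$ balances the RKHS term against the centered small-ball term. For the latter, the standard sequence-space estimate $-\log\PP(\|W\|_n<\delta)\asymp\delta^{-r/\alpha}$ (valid since $\gamma>1$) yields $-\log\PP(\|f\|_n<\eps_n/2\mid c_*)\asymp c_*^{(2\alpha+r)/(2\alpha)}\eps_n^{-r/\alpha}\asymp n\eps_n^2$. A direct check shows $\theta\le r/(2\beta+r)$ for every $\beta>0$, so the exponential density of $c$ gives $\PP(c\in[c_*,2c_*])\ge e^{-K'n\eps_n^2}$ regardless of whether $c_*\to 0$ or $c_*\to\infty$, and \eqref{prior_mass1} follows.

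For \eqref{remaining_mass1} and \eqref{entropy1} I would construct the sieve $B_n$ as a union over a geometric discretisation $\{c_j\}$ of $(0,M_n]$ with $M_n=K_2\,n\eps_n^2$, setting
\[
B_n=\bigcup_{c_j\le M_n}\bigl(R_{c_j}\mathbb H_1^{c_j}+\eps_n\mathbb U_1\bigr),
\]
where $\mathbb H_1^c=c^{\gamma/2}\mathbb H_1^W$ is the RKHS unit ball of $f\mid c$, $\mathbb U_1$ is the $\|\cdot\|_n$ unit ball, and the Borell radius $R_c$ is chosen of order $\sqrt{K_2 n\eps_n^2}+\sqrt{-2\log\PP(\|f\|_n<\eps_n\mid c)}$ so that Borell's inequality produces $\PP(f\notin R_{c_j}\mathbb H_1^{c_j}+\eps_n\mathbb U_1\mid c=c_j)\le e^{-2K_2 n\eps_n^2}$. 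Combined with the exponential tail $\PP(c>M_n)\le e^{-K_2 n\eps_n^2}$ this gives \eqref{remaining_mass1}. For the entropy, the embedding $\mathbb H_1^W\hookrightarrow (\RR^n,\|\cdot\|_n)$ has singular values $\sigma_i\asymp i^{-\gamma/2}$, so the classical ellipsoid entropy estimate gives $\log N(\delta,\mathbb H_1^W,\|\cdot\|_n)\lle\delta^{-r/\alpha}$. Absorbing $\eps_n\mathbb U_1$ into a doubled covering radius and summing the $O(\log n)$ terms in the union yields \eqref{entropy1}.

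The hard part will be organising the sieve so that the entropy bound holds uniformly across the full range of $c\in(0,M_n]$. Close to the balancing scale $c_*$ the numbers work out comfortably, but for $c$ substantially larger than $c_*$ (up to $M_n$) the centered small-ball probability is much smaller, so the Borell radius $R_c$ is inflated and could in principle push the entropy bound above $n\eps_n^2$. The remedy is a sufficiently fine geometric discretisation combined with the strong tail of the exponential prior on $c$, so that the unfavourable $c_j$ contribute negligibly to both the entropy count and the remaining mass. A secondary technicality is the control of the extremal spectral indices $i<i_0$ and $i>\kappa n$, where the geometry condition is not directly applicable; these are handled by \eqref{eq: l1bound} and the observation immediately following the statement of the condition, at the cost of unimportant multiplicative constants.
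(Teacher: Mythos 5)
Your high-level strategy — reduce to the Laplacian eigenbasis, apply the Gaussian concentration-function machinery conditionally on $c$, pick a deterministic $c_*$ balancing the small-ball term against the RKHS approximation term, and use Borell--Sudakov plus ellipsoid entropy for the sieve — is the right one and matches the paper in spirit. However, there is a concrete normalisation error that propagates into a wrong choice of $c_*$, and the sieve construction is organised in a way that creates a genuine difficulty which the paper's argument sidesteps.

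First, the normalisation. From the series representation $f\mid c = \sum_i Z_i\psi_i/\sqrt{n\mu_i}$ (where $\mu_i = ((n/c)^{2/r}(\lambda_i + n^{-2}))^{\alpha+r/2}$ and the $\psi_i$ are $\|\cdot\|_n$-orthonormal), the variance of the $i$th coordinate is $c^\gamma/(n\,\mu_i')$ with $\mu_i'\asymp (i\vee 1)^\gamma$ — note the factor $1/n$. Your representation $f\mid c\isd c^{\gamma/2}W$ with $\mathrm{Var}(W_i)=\sigma_i^2\asymp (i\vee1)^{-\gamma}$ drops this $1/n$, and correspondingly your small-ball estimate $-\log\PP(\|f\|_n<\eps\mid c)\asymp c^{(2\alpha+r)/(2\alpha)}\eps^{-r/\alpha}$ is missing the factor $n^{-r/(2\alpha)}$ (compare Lemma \ref{lemma_small1}), and your RKHS bound $\|f_0^{(I_n)}\|^2_{H_{c_*}}\lle c_*^{-\gamma}I_n^{2(\alpha-\beta)/r+1}$ is missing a prefactor $n$ (compare Lemma \ref{lemma_balls1}). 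If you rebalance with the corrected formulas, the two terms equal $n\eps_n^2$ at the same scale, and that scale is simply $c_*\asymp n\eps_n^2 = n^{r/(2\beta+r)}$ (equivalently $c_*^{(\alpha+r/2)/r}\asymp\sqrt n\,\eps_n^{(\beta-\alpha)/\beta}$), not $n^\theta$ with $\theta = 2r(\alpha-\beta)/((2\alpha+r)(2\beta+r))$. Indeed one can check that with your $\theta$ and, say, $\beta=\alpha$, the quantity $c_*^{(\alpha+r/2)/r}/(\eps_n\sqrt n)$ tends to \emph{zero} rather than infinity, so the small-ball lemma is not even applicable. The exponential prior on $c$ then gives $\PP(c\in[c_*,2c_*])\gtrsim e^{-2c_*}=e^{-2n\eps_n^2}$, the required order, so the conclusion is the same but the exponent is different from what you wrote.

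Second, the sieve. You propose $B_n$ as a union over a geometric grid $\{c_j\}\subset(0,M_n]$ and correctly identify the difficulty: for $c_j$ far above the balancing scale the Borell radius inflates and threatens the entropy bound, and you leave the resolution vague. This complication is avoidable. Because the RKHS unit balls satisfy the monotone inclusion $\HHH_1^c\subseteq\HHH_1^{c_n}$ for $c\le c_n$ (they are scaled copies of a fixed ellipsoid), a \emph{single} set $B_n=M_n\HHH_1^{c_n}+\eps_n\BBB_1$ works: for $c\le c_n$ the Borell--Sudakov bound applies uniformly with the fixed radius $M_n\asymp\sqrt{n\eps_n^2}$, since the small-ball probability $\PP(\|f\|_n\le\eps_n\mid c)$ is nondecreasing as $c$ decreases and is thus lower bounded by its value at $c_n$; and the tail $\PP(c>c_n)\le e^{-c_n}$ is already of order $e^{-n\eps_n^2}$ once $c_n$ is a sufficiently large multiple of $n\eps_n^2$. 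The entropy of the single set is then bounded exactly as in your ellipsoid argument, with $c=c_n$, giving $\lesssim n\eps_n^2$ with no need to control a supremum over scales. This is the key structural simplification your proposal misses.
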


Note that in this theorem we obtain the rate $n^{-{\beta}/({2\beta+r})}$
for all $\beta$ in the range $(0, \alpha +r/2]$. In the statistical 
results presented in Section \ref{sec: stat} this translates into 
rate-adaptivity up to regularity level $\alpha + r/2$. So by putting a 
prior on the multiplicative scale we achieve a degree of adaptation, 
but only up to an upper bound that is limited by our choice of the 
hyperparameter $\alpha$. 
A possible solution is to consider other functions of the Laplacian 
instead of using a power of $L$ in the prior specification. Here we 
consider usage of an exponential function of the Laplacian. We 
include a ``lengthscale'' or ``bandwidth'' hyperparameter that we 
endow with a prior as well for added flexibility.
This prior can be seen as the analogue of the prior used in \cite{castillo2014thomas} 
in the context of function estimation on manifolds, which in turn 
is a generalization of the popular squared exponential Gaussian prior 
used for estimation functions on Euclidean domains (e.g.\ \cite{rasmussen}). 
However, we stress again 
that we do not rely on an embedding of our graph in a manifold or the existence of 
a ``limiting manifold''.

In the next theorem there is indeed no restriction on the range of 
the smoothness $\beta$. We remark however that we obtain an additional 
logarithmic factor is the rate. Technically this is a consequence of the 
larger ``complexity'' of the support of this prior.

\begin{thm}[Exponential of the Laplacian]
\label{main_theorem2}
Suppose the geometry assumption holds for $r \ge 1$. 
Assume that $f_0\in H^\beta(C)$ for some $C>0$ and $\beta > 0$. 
Let the random function $f$ on the graph be defined by 
\begin{align}
\label{eq: c2} c & \sim \text{Exp}(1)\\
\label{eq: f2} f \given c &\sim N(0, ne^{-(n/c)^{2/r}L}).
\end{align}
Then there exists a constant $K_1>0$ and for all $K_2>1$ there exist Borel subsets $B_n$ of 
$\RR^n$ such that for every sufficiently large $n$, 
\begin{align}
\PP(\|f-f_0\|_n<\eps_n) & \geq e^{-K_1 n\eps_n^2}, \label{prior_mass} \\
\PP(f \notin B_n) & \le e^{-K_2 n\eps^2_n}, \label{remaining_mass} \\
\log N(\tilde\eps_n, B_n, \|\cdot\|_n) & \le n\tilde\eps^2_n \label{entropy},
\end{align}
where $\eps_n = (n/\log^{1+r/2}n)^{-\beta/(2\beta+r)}$ and $\tilde \eps_n = \eps_n \log^{1/2+r/4}n$.
\end{thm}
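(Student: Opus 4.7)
The plan is to mimic the concentration-function / sieve argument used for Theorem \ref{main_theorem} but adapted to the exponential covariance. Working conditionally on the scale $c$, set $\tau = (n/c)^{2/r}$. In the $\qv{\cdot,\cdot}_n$-orthonormal Laplacian eigenbasis $\psi_0,\dots,\psi_{n-1}$, the coordinates of $W = f\mid c$ are independent $N(0, e^{-\tau\lambda_j})$, and the reproducing kernel Hilbert space $\HH^c$ of the conditional prior carries the norm $\|h\|_{\HH^c}^2 = \sum_j e^{\tau\lambda_j} h_j^2$ with $h_j = \qv{h,\psi_j}_n$. All three inequalities will be reduced to bounding the concentration function
\[
\phi^c_{f_0}(\eps) = \inf\bigl\{\tfrac12\|h\|_{\HH^c}^2 : \|h-f_0\|_n < \eps\bigr\} - \log \PP(\|W\|_n<\eps\mid c)
\]
at $\eps = \eps_n$ for a well-chosen $c = c_n$.

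For the \textbf{decentered term}, expand $f_0 = \sum_j a_j\psi_j$ and approximate by $h = \sum_{j<N} a_j\psi_j$. The geometry condition gives $(n^{2/r}\lambda_j)^\beta \asymp j^{2\beta/r}$, so $f_0 \in H^\beta(C)$ implies both $\|h-f_0\|_n^2 \lesssim N^{-2\beta/r}$ and $a_j^2 \lesssim j^{-2\beta/r}$, whence $\|h\|_{\HH^c}^2 \lesssim e^{(N/c)^{2/r}}/N^{2\beta/r}$. Taking $N \asymp \eps_n^{-r/\beta}$ makes the approximation error $\lesssim \eps_n$, and $c_n \asymp \eps_n^{-r/\beta}/\log^{r/2}n$ forces $(N/c_n)^{2/r} \asymp \log n$, yielding $\|h\|_{\HH^{c_n}}^2 \lesssim n\eps_n^2$. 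For the \textbf{centered small ball}, a standard Gaussian lower-tail estimate controls $-\log\PP(\|W\|_n<\eps\mid c)$ by a multiple of $N^*(\eps)\log(1/\eps)$ with $N^*(\eps) = \#\{j : e^{-\tau\lambda_j} \ge \eps^2\}$; the geometry condition gives $N^*(\eps) \lesssim c\log^{r/2}(1/\eps)$, so this piece contributes at most $c_n\log^{1+r/2}n \lesssim \eps_n^{-r/\beta}\log n$, which is negligible compared to $n\eps_n^2 \asymp \eps_n^{-r/\beta}\log^{1+r/2}n$. Integrating $\PP(\|W-f_0\|_n<\eps_n\mid c)\,e^{-c}$ over a small neighbourhood of $c_n$, and noting that $c_n \ll n\eps_n^2$ so the prior density $e^{-c_n}$ is not too small, produces \eqref{prior_mass}.

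For the \textbf{sieve and entropy}, set $B_n = \bigcup_{c\in J_n}(M_n\HH_1^c + \eps_n\BBB_1)$ with $J_n = [c_-,c_+]$, $c_+ \asymp n\eps_n^2$ and $M_n \asymp \sqrt{n\eps_n^2}$. Borell's inequality applied conditionally on $c$ gives $\PP(W\notin M_n\HH_1^c + \eps_n\BBB_1\mid c) \le e^{-M_n^2/2}$, and $\PP(c \notin J_n)$ is exponentially small in $n\eps_n^2$, giving \eqref{remaining_mass}. For the entropy, the Kuelbs--Li type identification $\log N(\eps,\HH_1^c,\|\cdot\|_n) \lesssim c\log^{1+r/2}(1/\eps)$ inherited from the small-ball computation, together with the elementary bound $\log N(\tilde\eps_n, M_n\HH_1^c + \eps_n\BBB_1, \|\cdot\|_n) \le \log N(\tilde\eps_n/M_n, \HH_1^c, \|\cdot\|_n)$ and a discretisation of $J_n$, yields $\log N(\tilde\eps_n, B_n, \|\cdot\|_n) \lesssim c_+\log^{1+r/2}(M_n/\tilde\eps_n) \lesssim n\tilde\eps_n^2$, the $\log^{1+r/2}n$ factor separating $\tilde\eps_n^2$ from $\eps_n^2$ being precisely what absorbs this bound.

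The \textbf{main obstacle} is coordinating logarithmic factors across all three estimates. The identity $(N/c_n)^{2/r} \asymp \log n$ is exactly what equates the exponential RKHS penalty with the variance budget $n\eps_n^2$ and thereby fixes the $\log^{1+r/2}n$ factor appearing in $\eps_n$, while the additional $\log^{1+r/2}n$ inflation in $\tilde\eps_n$ is precisely what is needed to absorb the entropy contribution when the scale parameter is allowed to range up to $c_+ \asymp n\eps_n^2$. Getting the Gaussian small-ball upper bound with the correct power of $\log(1/\eps)$ in $N^*(\eps)$ is also essential, since a weaker bound would propagate into the critical rate.
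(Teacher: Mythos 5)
Your overall architecture coincides with the paper's: work conditionally on the scale $c$, bound the concentration function at $\eps_n$ by a truncation (decentering) estimate plus a centered small-ball estimate, integrate the conditional probability over a window of $c$ to get \eqref{prior_mass}, and take a sieve of the form $M_n\HHH_1^{c}+\eps_n\BBB_1$ controlled via the Borell--Sudakov inequality and the entropy bound $\log N(\eps,\HHH_1^c,\|\cdot\|_n)\lle c\log^{1+r/2}(1/\eps)$ (the paper's Lemmas \ref{lemma_balls} and \ref{lem_entropy}). The one genuinely different ingredient is your small-ball estimate: you use an elementary finite-dimensional truncation with $N^*(\eps)\asymp c\log^{r/2}(1/\eps)$ active coordinates, whereas the paper's Lemma \ref{lemma_small} invokes the exact Li--Shao asymptotics for $\PP(\sum a_iZ_i^2\le\eps^2)$ together with polylogarithm estimates. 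Your route is more elementary and yields the same order $c\log^{1+r/2}(c/\eps^2)$, provided you take the truncation level somewhat above $N^*(\eps)$: the tail $\sum_{j>N^*}e^{-\tau\lambda_j}$ is of order $c\log^{r/2-1}(1/\eps)\,\eps^2$, not $o(\eps^2)$, so the Markov step for the high frequencies needs $N\asymp c\log^{r/2}(c/\eps^2)$ rather than $N^*(\eps)$.

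Two steps in your sieve construction fail as written. First, $M_n\asymp\sqrt{n\eps_n^2}$ is too small. Borell--Sudakov gives $\PP(f\notin M_n\HHH_1^{c}+\eps_n\BBB_1\mid c)\le1-\Phi\bigl(\Phi^{-1}(\PP(\|f\|_n\le\eps_n\mid c))+M_n\bigr)$, and at the top of the range, $c\asymp c_+\asymp n\eps_n^2$, the conditional small-ball exponent is $c_+\log^{1+r/2}(c_+/\eps_n^2)\asymp n\tilde\eps_n^2$, so $\Phi^{-1}(\PP(\|f\|_n\le\eps_n\mid c_+))\asymp-\sqrt{n}\,\tilde\eps_n$, which swamps $M_n\asymp\sqrt{n}\,\eps_n$ and makes the bound $e^{-M_n^2/2}$ unavailable. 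You need $M_n$ to be a large multiple of $\bigl(c_+\log^{1+r/2}(c_+/\eps_n^2)\bigr)^{1/2}\asymp\sqrt{n}\,\tilde\eps_n$, as in the paper; this does not disturb the entropy bound, since only $\log(M_n/\tilde\eps_n)\asymp\log n$ enters there. Second, for the $\text{Exp}(1)$ prior $\PP(c<c_-)\approx c_-$, which is at best polynomially small, so the lower cut-off of $J_n$ cannot be handled by ``$\PP(c\notin J_n)$ is exponentially small''. Both defects disappear (and the discretisation of $J_n$ becomes unnecessary) once you note the monotonicity $\HHH_1^c\subseteq\HHH_1^{c_+}$ for all $c\le c_+$: the sieve can be taken to be the single set $M_n\HHH_1^{c_+}+\eps_n\BBB_1$ and the Borell bound then applies uniformly over $c\le c_+$, which is exactly how the paper argues. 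With these repairs your argument goes through and delivers the stated rates.
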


\section{Proofs of Theorems \ref{main_theorem} and \ref{main_theorem2}}
\label{sec: proofs}

Recall that we identify functions on the graph with vectors in $\RR^n$. 
In both cases we have that given $c$, the random vector $f$ is a centered 
$n$-dimensional Gaussian random vector. 
We view this as a Gaussian random element in the space $(\RR^n, \|\cdot\|_n)$. 
The corresponding RKHS $\HHH^c$ is the entire space $\RR^n$, and the corresponding
RKHS-norm is given by 
\[
\|h\|^2_{\HHH^c} = h^T\Sigma_c^{-1} h, 
\]
where $\Sigma_c$ is the covariance matrix of $f \given c$. (See e.g.\ \cite{RKHS}
for the definition and properties of the RKHS.)
Recall that the $\psi_i$ are the eigenfunctions of $L$, normalised with respect 
to the norm $\|\cdot\|_n$. They are then also eigenfunctions of $\Sigma_c^{-1}$
in both cases. We 
denote the corresponding eigenvalues by $\mu_i$.

The Gaussian $N(0, \Sigma_c)$ admits the series representation 
\begin{equation}\label{eq: series}
\sum Z_i\psi_i/\sqrt{n\mu_i}, 
\end{equation}
where $Z_1, \ldots, Z_n$ are standard normal variables.
In particular the functions $\psi_i/\sqrt{n\mu_i}$ form an orthonormal basis of the 
RKHS $\HHH^c$. Hence, the ordinary $\|\cdot\|_n$-norm and the RKHS-norm of a function $h$ with expansion
$h = \sum h_i \psi_i$ are given by 
\begin{equation}\label{eq: norms}
\|h\|_n^2 = \sum\limits_{i=0}^{n-1} h_i^2, \qquad 
\|h\|^2_{\HHH^c} = n \sum_{i=0}^{n-1}\mu_ih_{i}^2. 
\end{equation}
We denote the unit ball of the RKHS by $\HHH^c_1 = \{h \in \HHH^c: \|h\|_{\HHH^c} \le 1\}$.

\subsection{Proof of Theorem \ref{main_theorem}}

In this case $\Sigma_c^{-1} = ((n/c)^{{2}/r}(L+ n^{-2} I))^{\alpha+r/2}$ is the precision matrix of $f$ given $c$ 
and the eigenvalues of $\Sigma_c^{-1}$ are given by 
\[
\mu_i = \Big(\Big(\frac nc\Big)^{{2}/r}\Big(\lambda_i+\frac1{n^2}\Big)\Big)^{\alpha+r/2}.
\]

\subsubsection{Proof of \eqref{prior_mass1}}

By Lemma 5.3 of \cite{RKHS}, it follows from Lemmas \ref{lemma_small1} and \ref{lemma_balls1} ahead that 
under the conditions of the theorem and for $\eps= \eps_n=n^{-\beta/(r+2\beta)}$ 
and $c = c_n$ satisfying 
$\sqrt n \eps_n^{(\beta-\alpha)/\beta} \le c^{(\alpha+ r/2)/r}_n \le 2 \sqrt n \eps_n^{(\beta-\alpha)/\beta}$, 
we have
\[
-\log \PP(\|f - f_0\|_n \given c) \lle \eps_n^{-r/\beta}.
\]
By conditioning, it is then seen that
\begin{align*}
\PP(\|f-f_0\|_n < \eps_n) \ge e^{- K_0\eps_n^{-r/\beta}}
\int_{(\sqrt n\eps_n^{({\beta-\alpha})/\beta})^{r/(\alpha+r/2)}}^{(2\sqrt n\eps_n^{({\beta-\alpha})/ \beta})^{r/(\alpha +r/2)}} e^{-x}\,dx 
\ge e^{- K_1\eps_n^{-r/\beta}},
\end{align*}
for constants $K_0, K_1 > 0$.

\begin{lem}
\label{lemma_small1}
For $n$  large enough and $\eps > 0$ and $\eps\sqrt n/ c^{(\alpha + r/2)/r}$ small enough,
\begin{equation}
\label{small_ball}
-\log \PP(\|f\|_n \le \eps \given c) \lle \Big(\frac{c^{(\alpha + r/2)/r}}{\eps\sqrt n}\Big)^{\frac r\alpha}. 
\end{equation}
\end{lem}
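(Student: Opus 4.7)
The plan is to reduce the small ball probability to an event about a weighted sum of i.i.d.\ chi-squared random variables via the series representation, use the geometry condition to obtain polynomial control on the weights, and then appeal to a classical small ball estimate for such sums.

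First, using \eqref{eq: series} together with the $\|\cdot\|_n$-orthonormality of the eigenfunctions $\psi_i$, conditionally on $c$ we have
\[
\|f\|_n^2 = \sum_{i=0}^{n-1} \sigma_i^2 Z_i^2, \qquad \sigma_i^2 := \frac{1}{n\mu_i},
\]
with $Z_0,\dots,Z_{n-1}$ i.i.d.\ standard Gaussians and $\mu_i = (n/c)^{(2\alpha+r)/r}(\lambda_i+n^{-2})^{\alpha+r/2}$. The task becomes a lower bound on $\PP(\sum_i \sigma_i^2 Z_i^2 \le \eps^2 \given c)$.

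Second, I translate the geometry assumption into a polynomial bound on the variances. Using $\lambda_i \ge C_1 (i/n)^{2/r}$ for $i \ge i_0$, which as noted after the Condition can be extended (after adjusting the constant) to all $i \in \{i_0,\dots,n-1\}$, a direct computation gives
\[
\sigma_i^2 \le C_3\, B^2\, i^{-2s}, \qquad i \ge i_0, \qquad \text{where } B := \frac{c^{(\alpha+r/2)/r}}{\sqrt{n}}, \ \ s := \frac{\alpha}{r}+\frac{1}{2} > \frac{1}{2}.
\]
For the finitely many exceptional modes $i < i_0$, the regularisation $n^{-2}I$ together with the universal bound \eqref{eq: l1bound} on $\lambda_1$ yield a polynomial (in $n$ and $1/c$) upper bound on $\sigma_i^2$.

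Third, I apply the classical small ball estimate for weighted sums of $\chi^2_1$ variables with polynomial weights: for $\sigma_i^2 \le B^2 i^{-2s}$ with $s > 1/2$,
\[
-\log \PP\Big(\textstyle\sum_{i\ge i_0} \sigma_i^2 Z_i^2 \le \eps^2/2\Big) \lle (B/\eps)^{1/(s-1/2)}
\]
provided $\eps/B$ is sufficiently small. This is proved by cutting the sum at $N \asymp (B/\eps)^{1/(s-1/2)}$: the expected tail $\sum_{i\ge N}\sigma_i^2 \lle B^2 N^{1-2s}$ is $\lle \eps^2$ so the tail is handled by Markov, while the head is bounded as a product of Gaussian anti-concentration estimates $\PP(|Z_i| \le u) \ge u/\sqrt{2\pi}$ for $u \le 1$, whose total contribution is of order $N$ after the logarithmic factors cancel. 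Substituting $s-1/2 = \alpha/r$ and the value of $B$ yields the stated rate $(c^{(\alpha+r/2)/r}/(\eps\sqrt{n}))^{r/\alpha}$.

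The main obstacle I expect is handling the exceptional modes $i < i_0$, in particular $i=0$, where $\lambda_0 = 0$ forces the regularisation to dominate and $\sigma_0^2$ can be much larger than the polynomial envelope. The plan to resolve this is to bound their combined contribution to $-\log \PP(\sum_{i<i_0}\sigma_i^2 Z_i^2 \le \eps^2/2)$ by a constant times $\log(n/\eps)$, again via one-dimensional Gaussian anti-concentration; the hypothesis that $\eps\sqrt{n}/c^{(\alpha+r/2)/r}$ is sufficiently small will ensure this logarithmic term is absorbed by the polynomial main term $(B/\eps)^{r/\alpha}$.
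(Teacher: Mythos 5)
Your proposal is correct and its overall architecture coincides with the paper's: both pass to the series representation $\|f\|_n^2=\sum Z_i^2/(n\mu_i)$ conditionally on $c$, split the modes at $i_0$, control the finitely many low modes (where $\lambda_0=0$ and the regularisation $n^{-2}I$ together with $\lambda_1\ge 4/n^2$ give only a polynomial-in-$(n,c)$ bound on the variances) by a product of one-dimensional Gaussian anti-concentration estimates, and reduce the remaining modes to a small-ball probability for $\sum_{i>i_0} i^{-2p/r}Z_i^2$ with $p=\alpha+r/2$, i.e.\ polynomial weights $B^2i^{-2s}$ with $B=c^{p/r}/\sqrt n$ and $s-1/2=\alpha/r$. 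The one genuine difference is how that polynomial-weight small-ball bound is obtained: the paper invokes Corollary 4.3 of \cite{dunker} as a black box, whereas you rederive it by truncating at $N\asymp(B/\eps)^{r/\alpha}$, handling the tail by Markov's inequality and the head by a product of anti-concentration bounds with the logarithms cancelling via Stirling. That derivation is standard and sound, so your route is self-contained where the paper's is citation-based, at the cost of a page of routine computation. The only point worth making fully explicit in a write-up is the absorption of the low-mode contribution: as in the paper, one needs $\log\bigl(c^{p/r}n^{-(\alpha+r-pr)/r}/\eps\bigr)\lle\bigl(c^{p/r}/(\eps\sqrt n)\bigr)^{r/\alpha}$, which rests on $r\ge1$ (so that $c^{-p/r}n^{(\alpha+r-pr)/r}\eps=O(c^{-p/r}\eps\sqrt n)$) and, strictly speaking, is what the "$\eps\sqrt n/c^{p/r}$ small enough'' hypothesis (together with the choice of $\eps_n,c_n$ downstream) is there to guarantee; your sketch asserts this but should carry out the exponent comparison.
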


\begin{proof}
By the series representation \eqref{eq: series} we have 
$\PP(\|f\|_n \le \eps \given c) = \PP(\sum Z_i^2/(n\mu_i) \le \eps^2)$. 
Recall from Section \ref{sec: geometry} that we can assume without loss of generality 
that we have the lower bounds
\begin{align}
\label{eq: lb1} \lambda_i & \ge C_1 \Big(\frac1n\Big)^2, \qquad \ \ \ 1 \le i \le i_0,\\
\label{eq: lb2} \lambda_i & \ge C_1 \Big(\frac in\Big)^{2/r}, \qquad i > i_0.
\end{align}
These translate into lower bounds for the $\mu_i$ from which it follows that 
for $\eps >0$, 
\begin{align*}
\PP(& \|f\|^2_n \le 2\eps^2 \given c) \ge 
\PP\Big(\sum_{i \le i_0} \frac{Z_i^2}{n\mu_i} \le \eps^2, \sum_{i > i_0}\frac{Z_i^2}{n\mu_i} \le \eps^2\Big)\\
& \ge \PP\Big(\sum_{1<i \le i_0} {Z_i^2} \le (C_1^pc^{-2p/r}{n^{(2\alpha+2r-2pr)/r}})\eps^2\Big)
\PP\Big(\sum_{i > i_0}\frac{Z_i^2}{i^{2p/r}} \le C_1^pc^{-2p/r}n\eps^2\Big),
\end{align*}
where we write $p = \alpha + r/2$.
By Corollary 4.3 from \cite{dunker}, the last factor in the last line is bounded
form below by 
\[
\exp\Big(-\text{const}\times (c^{-p/r}\eps\sqrt{n})^{-r/\alpha}\Big),
\]
provided $c^{-p/r}\eps\sqrt{n}$ is small enough.
By the triangle inequality and independence, the first factor is bounded from below 
by 
\[
\Big(\PP(|Z_1| \le i^{1/2}_0C_1^{p/2}c^{-p/r}{n^{(\alpha+r-pr)/r}}\eps)\Big)^{i_0}.
\]
Since $r \ge 1$, we have $c^{-p/r}{n^{(\alpha+r-pr)/r}}\eps = O(c^{-p/r}\eps\sqrt n)$. Hence,
for $c^{-p/r}\eps\sqrt n$ small enough the probability is further bounded from 
below by 
\[
\text{const} \times \Big(c^{-p/r}{n^{(\alpha+r-pr)/r}}\eps\Big)^{i_0}.
\]
Combining the bounds for the separate factors we find that, for $c^{-p/r}\eps\sqrt n$ small enough,
\[
-\log \PP(\|f\|^2_n \le 2\eps^2 \given c) \lle \log\Big(\frac{c^{p/r}}{n^{(\alpha+r-pr)/r}\eps}\Big) 
+ \Big(\frac{c^{p/r}}{\eps\sqrt n}\Big)^{r/\alpha}. 
\]
Since $r \ge 1$ the first term on the right is smaller than a constant times the second
one if $c^{-p/r}\eps\sqrt n$ is small enough.
\end{proof}

\begin{lem}
\label{lemma_balls1}
Let $f \in H^\beta(C)$ for $\beta \leq \alpha+r/2$.
For $\eps > 0$ such that $\eps \to 0$ as $n \to \infty$ and $1/\eps = o(n^{\beta/r})$
and $n$ large enough, 
\begin{equation}
\label{infimum1}
\inf_{h \in \HHH^c: \, \|h-f\|_n\leq \eps} \|h\|^2_{\HHH^c} 
\lle nc^{-(2\alpha + r)/r} \eps^{-\frac{2(\alpha-\beta) + r}{\beta}}. 
\end{equation}
\end{lem}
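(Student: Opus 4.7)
The plan is to bound the infimum by exhibiting a specific candidate $h$, namely the truncation of $f$ in the eigenbasis $\{\psi_i\}$ of the Laplacian. Write $f = \sum_{i=0}^{n-1} f_i\psi_i$ with $f_i = \qv{f,\psi_i}_n$, so that the assumption $f\in H^\beta(C)$ translates via \eqref{eq: h} and the upper part of the geometry condition into
\[
\sum_{i=0}^{n-1}\bigl(1 + (n^{2/r}\lambda_i)^\beta\bigr) f_i^2 \;\le\; C^2,
\]
and in particular, using $n^{2/r}\lambda_i \asymp i^{2/r}$ for $i_0\le i\le\kappa n$, yields the Sobolev-type control $\sum_{i\ge i_0} i^{2\beta/r} f_i^2 \lesssim 1$ and the crude bound $\|f\|_n\le C$.

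Define $h := \sum_{i<N} f_i\psi_i$ for a truncation level $N$ to be chosen. By \eqref{eq: norms}, $\|h-f\|_n^2 = \sum_{i\ge N} f_i^2$, which I bound by pulling out $(n^{2/r}\lambda_N)^{-\beta}$ and using the smoothness. The hypothesis $1/\eps = o(n^{\beta/r})$ guarantees that if I choose $N$ to be a suitable constant multiple of $\eps^{-r/\beta}$, then $i_0\le N\le \kappa n$ for large $n$, so the two-sided geometry bound applies and gives $(n^{2/r}\lambda_N)^{-\beta}\asymp N^{-2\beta/r}\lesssim\eps^2$. Hence this $h$ lies in the feasible set.

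It remains to estimate $\|h\|^2_{\HHH^c} = n\sum_{i<N}\mu_i f_i^2$ with $\mu_i = ((n/c)^{2/r}(\lambda_i + n^{-2}))^{p}$, writing $p := \alpha + r/2$. I would split the sum at $i_0$. For $i<i_0$ the eigenvalues $\lambda_i$ are bounded above by $\lambda_{i_0}\lesssim (i_0/n)^{2/r}$ and $n^{-2}\le n^{-2/r}\le (i_0/n)^{2/r}$ because $r\ge 1$, so $\mu_i\lesssim c^{-2p/r}$; combined with $\sum_{i<i_0}f_i^2\le C^2$ this gives a contribution of order $n c^{-2p/r}$. For $i_0\le i<N$, I use $\mu_i\lesssim c^{-2p/r} i^{2p/r}$ and Hölder/monotonicity in the form
\[
\sum_{i_0\le i<N} i^{2p/r} f_i^2 \;=\; \sum_{i_0\le i<N} i^{2(p-\beta)/r}\cdot i^{2\beta/r} f_i^2
\;\le\; N^{2(p-\beta)/r}\sum_{i\ge i_0} i^{2\beta/r} f_i^2 \;\lesssim\; N^{2(p-\beta)/r},
\]
which is legitimate because $\beta\le p$. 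With $N\asymp\eps^{-r/\beta}$ the exponent works out to $N^{2(p-\beta)/r}\asymp \eps^{-(2(\alpha-\beta)+r)/\beta}$, yielding the main term $nc^{-2p/r}\eps^{-(2(\alpha-\beta)+r)/\beta}$. Since $\eps\to 0$ and $(2(\alpha-\beta)+r)/\beta\ge 0$, the main term dominates the low-index contribution, giving the claimed bound.

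The main obstacles I anticipate are bookkeeping rather than conceptual: verifying that $N = \eps^{-r/\beta}$ simultaneously satisfies $N\ge i_0$ and $N\le\kappa n$ so that the two-sided geometry bound can be applied (this is exactly what the hypothesis $1/\eps = o(n^{\beta/r})$ is for), and carefully absorbing the small-index block $i<i_0$ into the stated bound — in particular checking that the factor $n^{p(2/r-2)}$ coming from the $n^{-2}$ regularisation of the Laplacian at the small eigenvalues does not exceed $\eps^{-(2(\alpha-\beta)+r)/\beta}$, which holds because $r\ge 1$ makes the former bounded while the latter tends to infinity (or is of order one).
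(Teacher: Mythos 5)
Your proposal is correct and follows essentially the same strategy as the paper's proof: truncate $f$ in the Laplacian eigenbasis at $N\asymp\eps^{-r/\beta}$, use the smoothness to show the tail sum is $\lesssim\eps^2$, and bound the RKHS norm by pulling out a factor $N^{2(p-\beta)/r}$ (resp.\ $\lambda_I^{p-\beta}$ in the paper) from the low-frequency sum. The only cosmetic difference is that you split the RKHS sum explicitly at $i_0$, while the paper separates the $n^{-2}$ regularisation from the $\lambda_i$ contribution via $(\lambda_i+n^{-2})^p\lesssim\lambda_i^p+n^{-2p}$ and only invokes the two-sided geometry bound at the very end; also note that the Sobolev-type control $\sum_{i\ge i_0} i^{2\beta/r}f_i^2\lesssim1$ you use follows from the \emph{lower} part of the geometry bound, not the upper.
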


\begin{proof}
We use an expansion $f = \sum f_{i}\psi_{i}$, with $\psi_i$ the orthonormal eigenfunctions of the Laplacian.
In terms of the coefficients the smoothness assumption reads 
$\sum (1+n^{{2\beta}/r} \lambda_i^\beta) f_{i}^2 \le C^2$. 
Now for $I$ to be determined below, consider $h = \sum_{{i} \le I} f_{i}\psi_{i}$. 
In view of \eqref{eq: lb1}--\eqref{eq: lb2} we have, for $I$ large enough,
\[
\|h-f\|_n^2 = \sum_{{i}> I} f_{i}^2 \le \frac{C^2}{1+ n^{{2\beta}/r}\lambda_I^\beta} \le C^2C_1^{-\beta} I^{-2\beta/r}. 
\]
Setting $I = \text{const} \times \eps^{-r/\beta}$ we get $\|h - f\|_n \le \eps$.
By \eqref{eq: norms}, the RKHS-norm of $h$ satisfies
\begin{align*}
\|h\|^2_{\HHH^c} & = n \sum_{i\le I}((n/c)^{{2}/r}(\lambda_i+n^{-2}))^{\alpha+r/2}f_{i}^2\\
& \lle n {c^{-2p/r}} C^2+ 
 {c^{-2p/r}} C^2 n^{2+2(\alpha-\beta)/r} \lambda_{I}^{p-\beta}. 
\end{align*}
The condition on $\eps$ ensures that for the choice of $I$ made above and $n$ large enough, 
$i_0 \le I \le \kappa n$. Hence, by \eqref{eq: lb1}--\eqref{eq: lb2}, 
$\|h\|_{\HHH^c}^2$ is bounded by a constant times the right-hand side of \eqref{infimum1}.
\end{proof}

\subsubsection{Proof of \eqref{remaining_mass1} and \eqref{entropy1}}

Define $B_n = M_n \HHH_1^{c_n} +\eps_n\BBB_1$, where $\BBB_1$ is the unit ball of $(\RR^n, \|\cdot\|_n)$, 
$\eps_n=n^{-\beta/(r+2\beta)}$ again and $c_n, M_n$ are the sequences to be determined below. 
By Lemma \ref{lem_entropy1} we have 
\begin{equation*}
\log N(2\eps_n, B_n, \|\cdot\|_n) \leq \log N(\eps_n/M_n, \HHH^{c_n}_1, \|\cdot\|_n) \lle
c_n\Big(\frac{M_n}{\eps_n\sqrt n}\Big)^{\frac r{p}},
\end{equation*}
where $p = \alpha + r/2$ again.
For $M_n=M\sqrt{n\eps^2_n}$ and $c^{p/r}_n=N \sqrt n\eps_n^{({\beta-\alpha})/{\beta}}$ this is bounded by a constant times $n\eps^2_n$, which proves \eqref{entropy1}. 

It remains to show that for given $K_2 > 1$, the constants 
$M $ and $N$ can be chosen such that \eqref{remaining_mass1} holds. We have
\[
\PP(f \notin B_n)\leq \int_0^{c_n}\PP(f \notin M_n \HHH_1^{c_n} +\eps_n\BBB_1 \given c) e^{-c}\,dc
+\int_{c_n}^\infty e^{-c}\,dc.
\]
For $c\leq c_n$ we have the inclusion $\HHH_1^c \subseteq \HHH^{c_n}_1$. Hence, by the Borell--Sudakov inequality, 
we have for $c\leq c_n$ that
\begin{align*}
\PP(f \not \in B_n\given c) & \le \PP(f \not \in M_n \HHH_1^{c} +\eps_n\BBB_1\given c)\\
& \le 1-\Phi(\Phi^{-1}(\PP(\|f\|_n \le {\eps_n}\given c)+M_n) )\\
& \le 1-\Phi(\Phi^{-1}(\PP(\|f\|_n \le {\eps_n}\given c_n)+M_n)),
\end{align*}
where $\Phi$ is the cdf of the standard normal distribution.
By Lemma \ref{lemma_small1} the small ball probability in this expression is for 
$c^{p/r}_n=N \sqrt n\eps_n^{({\beta-\alpha})/{\beta}}$ bounded from below by
$\exp(-K\eps_n^{-r/\beta})$ for some $K > 0$. 
Using the bound $\Phi^{-1}(y)\ge -({ (5/2) \log(1/y)})^{1/2}$ for $y \in(0, 1/2)$, it follows that for $c \le c_n$, 
\[
\PP(f \not \in B_n\given c) \le 1 - \Phi(M_n-K' \eps_n^{-r/(2\beta)})
\]
for some $K'>0$. For $M_n$ a large enough multiple of that $\eps_n^{-r/(2\beta)}$ 
this is bounded by $\exp(-K_2\eps_n^{-r/ \beta})=\exp(-K_2 n \eps_n^2)$.

\begin{lem}
\label{lem_entropy1}
For $n$ large enough and $c, \eps > 0$ we have 
\begin{equation}
\label{entropy_1}
\log N(\eps, \HHH_1^{c}, \|\cdot\|_n) \lle c\Big(\frac{1}{\eps\sqrt n}\Big)^{\frac r{\alpha + r/2}}.
\end{equation}
\end{lem}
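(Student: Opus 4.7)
The plan is to identify $\HHH_1^c$ as an ellipsoid in $(\RR^n,\|\cdot\|_n)$ and then apply a volume-based covering argument tailored to the polynomial decay of its semi-axes. By the relations in \eqref{eq: norms}, writing $h=\sum h_i\psi_i$ in the orthonormal eigenbasis of the Laplacian identifies $\HHH_1^c$ with $\{h:\,n\sum\mu_i h_i^2\le 1\}$, an ellipsoid with semi-axes $a_i=1/\sqrt{n\mu_i}$, where $\mu_i=((n/c)^{2/r}(\lambda_i+n^{-2}))^{p}$ and $p=\alpha+r/2$.

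First I would obtain a uniform lower bound of the form $\mu_i\gge (i/c)^{2p/r}$ valid for every $1\le i\le n$ and $n$ large. For $i\ge i_0$ this follows from the geometry assumption, extended to $i\le n$ exactly as in Section \ref{sec: geometry}; for $i<i_0$ it follows from the general lower bound $\lambda_i\ge 4/n^2$ recorded in \eqref{eq: l1bound}. Consequently $\HHH_1^c$ is contained in a larger ellipsoid $\tilde E$ whose semi-axes satisfy $\tilde a_i\asymp (c/i)^{p/r}/\sqrt n$, a clean power law that is easy to analyse.

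Next I would cover $\tilde E$ by projecting onto its first $N_\eps:=|\{i:\,\tilde a_i>\eps/2\}|$ coordinates and controlling the tail. Because the semi-axes decrease, for any $h\in\tilde E$ the $\|\cdot\|_n$-norm of the coordinates beyond index $N_\eps$ is at most $\tilde a_{N_\eps+1}\le \eps/2$, so any $(\eps/2)$-net of the projected ellipsoid in $\RR^{N_\eps}$ lifts to an $\eps$-net of $\tilde E$. Enclosing the projected ellipsoid in the box $\prod_{i\le N_\eps}[-\tilde a_i,\tilde a_i]$ and applying the standard volume inequality gives $\log N(\eps,\tilde E,\|\cdot\|_n)\le\sum_{i\le N_\eps}\log(C\tilde a_i/\eps)$ for a constant $C$. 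A direct count yields $N_\eps\asymp c(\eps\sqrt n)^{-r/p}$; if $N_\eps<1$ the ellipsoid already fits inside an $\eps$-ball and the bound is trivial.

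The main obstacle, and the only step that requires real care, is showing that the remaining sum does not pick up a logarithmic factor. The key observation is that $\tilde a_i/\eps\asymp (N_\eps/i)^{p/r}$ up to a constant, so the sum takes the form $\sum_{i\le N_\eps}[(p/r)\log(N_\eps/i)+O(1)]$. By Stirling's formula (equivalently, by a Riemann-sum approximation of the integrable function $t\mapsto\log(1/t^{p/r})$ on $(0,1)$) this is $O(N_\eps)$. Combining with the estimate of $N_\eps$ yields $\log N(\eps,\HHH_1^c,\|\cdot\|_n)\le \log N(\eps,\tilde E,\|\cdot\|_n)\lle c(\eps\sqrt n)^{-r/p}$, which is the claimed bound.
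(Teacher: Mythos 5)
Your overall strategy is the same as the paper's: use the spectral lower bounds to enclose $\HHH_1^c$ in a tractable ellipsoid and then invoke the entropy bound for ellipsoids with polynomially decaying semi-axes (which you reprove from scratch via the Stirling/Riemann-sum computation; the paper simply cites the standard bound for ellipsoids in $\ell^2$). However, there is one genuine error. Your uniform lower bound $\mu_i \gge (i/c)^{2p/r}$ for \emph{all} $1\le i\le n$ fails for the indices $i<i_0$ whenever $r>1$. Indeed, for those indices the only available bound is $\lambda_i + n^{-2} \gge n^{-2}$, which gives
\[
\mu_i \;\gge\; \Big(\big(n/c\big)^{2/r}n^{-2}\Big)^{p} \;=\; c^{-2p/r}\,n^{(2-2r)p/r},
\]
and for $r>1$ the factor $n^{(2-2r)p/r}$ tends to $0$, whereas your claimed bound requires it to stay bounded away from $0$ (since $i^{2p/r}\asymp 1$ for $i<i_0$). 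Consequently the first few semi-axes of $\HHH_1^c$ can be of order $c^{p/r}n^{(r-1)p/r}/\sqrt n$, which is polynomially larger than the $\tilde a_i\asymp (c/i)^{p/r}/\sqrt n$ of your comparison ellipsoid $\tilde E$, and the inclusion $\HHH_1^c\subseteq\tilde E$ on which everything rests is false.

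The repair is exactly what the paper does: split off the coordinates $i\le i_0$ and treat them as a Euclidean ball in $\RR^{i_0}$ of (possibly large) radius $\rho_n\asymp n^{p}R$ with $R=c^{p/r}n^{-(\alpha+r)/r}$; since $i_0$ is a fixed finite number, these coordinates contribute only an additive term $\lle i_0\log_+(\rho_n/\eps)$ to the entropy, which is then absorbed into the main polynomial term $c(\eps\sqrt n)^{-r/p}$ in the regime where the lemma is applied. Your power-law analysis is then applied, correctly, only to the coordinates $i>i_0$, where the geometry assumption (extended to all $i\le n$ as in Section \ref{sec: geometry}) does give $\mu_i\gge (i/c)^{2p/r}$. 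With that modification your count $N_\eps\asymp c(\eps\sqrt n)^{-r/p}$ and the Stirling argument showing $\sum_{i\le N_\eps}\log(C\tilde a_i/\eps)=O(N_\eps)$ are correct and yield the claimed bound. One further small caution: when bounding the covering number of the projected ellipsoid, use the volume-ratio argument on the ellipsoid itself (a maximal $\eps$-separated subset has cardinality at most $\prod_{i\le N_\eps}(1+2\tilde a_i/\eps)$); covering the enclosing box by $\eps$-balls directly would cost an extra factor $N_\eps^{N_\eps/2}$, i.e.\ a spurious $N_\eps\log N_\eps$ in the entropy.
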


\begin{proof}
By expanding the RKHS elements in the eigenbasis of the Laplacian
and taking into account the relations \eqref{eq: norms} we see that 
the problem is to bound the entropy $\log N(\eps, A, \|\cdot\|)$,
where
\[
A = \Big\{x \in \RR^n: n \sum_{i=0}^{n-1} ((n/c)^{{2}/r}(\lambda_i+n^{-2})^{\alpha+r/2}x_{i}^2 \le 1\Big\}. 
\]
Using the bounds 
\eqref{eq: lb1}--\eqref{eq: lb2},
it follows that with $p = \alpha + r/2$ and $R = c^{p/r}n^{-(\alpha+r)/r}$ we have the inclusions
\begin{align*}
A & \subset \Big\{x \in \RR^n: \sum_{i=0}^{n-1} \lambda_i^{p}x_{i}^2 \le R^2\Big\}\\
& \subset \Big\{x \in \RR^n: \sum_{i\le i_0} x_{i}^2 \le C_1^{-p}n^{2p}R^2, \quad
\sum_{i > i_0} i^{2p/r}x_i^2\le C_1^{-p}n^{2p/r}R^2\Big\}.
\end{align*}
By using the well-known entropy bounds for balls in $\RR^{i_0}$ and ellipsoids 
in $\ell^2$ we 
deduce from this that for $\eps > 0$, 
\[
\log N(2\eps, A, \|\cdot\|) \lle \log_+\Big(\frac{n^{p}R}{\eps}\Big)
+ \Big(\frac{n^{p/r}R}{\eps}\Big)^{r/p} \lle \Big(\frac{n^{p/r}R}{\eps}\Big)^{r/p}.
\]
The proof is completed by recalling the expressions for $p$ and $R$.
\end{proof}

\subsection{Proof of Theorem \ref{main_theorem2}}
In this case the eigenvalues of $\Sigma_c^{-1}$ are given by 
\[
\mu_i = n^{-1}e^{(n/c)^{2/r}\lambda_i}.
\]

\subsubsection{Proof of \eqref{prior_mass}}

By Lemma 5.3 of \cite{RKHS}, it follows from Lemmas \ref{lemma_small} and \ref{lemma_balls} ahead that 
under the conditions of the theorem and for $\eps = \eps_n = (n/\log^{1+r/2}n)^{-\beta/(r+2\beta)}$ and 
$n\eps^2/\log^{1+r/2} n \le c \le 2n\eps^2/\log^{1+r/2} n$, we have
\[
-\log P(\|f - f_0\|_n \le \eps \given c) \lle c \log^{1+r/2} \frac{c}{\eps^2} + e^{K c^{-2/r}\eps^{-2/\beta}}
\lle n\eps^2.
\]
By conditioning, similar as in the previous case, we find that 
$-\log P(\|f - f_0\|_n \le \eps) \lle n\eps^2$
as well.

\begin{lem}
\label{lemma_small}
If $\eps \to 0$, $c$ is bounded away from $0$ and $c/\eps^2 \to \infty$, then 
\[
-\log P(\|f\|_n \le \eps \given c) \lle c \log^{1+r/2} \frac{c}{\eps^2}.
\]
\end{lem}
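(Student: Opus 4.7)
The plan is to exploit the series representation
$\|f\|_n^2 \isd \sum_{i=0}^{n-1} Z_i^2\, a_i$,
where $Z_0, \ldots, Z_{n-1}$ are i.i.d.\ standard normal and
$a_i := 1/(n\mu_i) = \exp(-(n/c)^{2/r}\lambda_i) \in (0,1]$,
and to split the sum at a carefully chosen index $N$. I would take $N$ of exact order $c\log^{r/2}(c/\eps^2)$, tuned so that the tail $\sum_{i \ge N} Z_i^2 a_i$ is below $\eps^2/4$ with probability at least $\tfrac12$. The head, crudely bounded by $\sum_{i<N} Z_i^2 \sim \chi^2_N$ thanks to $a_i \le 1$, will then supply the dominant small-ball cost.

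For the tail I would use the extended lower bound $\lambda_i \ge \tilde C_1 (i/n)^{2/r}$ (valid for all $i \ge i_0$, as noted after the geometry condition), giving $a_i \le \exp(-\tilde C_1 (i/c)^{2/r})$. A Laplace-type estimate of the corresponding integral $\int_N^\infty \exp(-\tilde C_1 (x/c)^{2/r})\,dx$, via the substitution $u = (x/c)^{2/r}$, yields
\[
\sum_{i \ge N} a_i \lle c\bigl(N/c\bigr)^{1-2/r}\exp\bigl(-\tilde C_1(N/c)^{2/r}\bigr).
\]
Setting $N := \lceil c\,(K\log(c/\eps^2))^{r/2}\rceil$ for $K$ a sufficiently large multiple of $1/\tilde C_1$ drives the right-hand side below $\eps^2/8$. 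The hypotheses ($\eps \to 0$, $c$ bounded below, $c/\eps^2 \to \infty$) guarantee $N \ge i_0$, and in the range in which this lemma is used in Theorem \ref{main_theorem2} one also has $N \le n$. Markov's inequality then delivers $\PP(\sum_{i \ge N} Z_i^2 a_i > \eps^2/4 \given c) \le \tfrac12$.

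Combining the two bounds via independence gives
$\PP(\|f\|_n \le \eps/\sqrt 2 \given c) \ge \tfrac12 \PP(\chi^2_N \le \eps^2/4)$.
For the chi-squared small ball I would integrate the density on $[0,t]$ and apply Stirling's formula to $\Gamma(N/2)$ to obtain the standard estimate $-\log \PP(\chi^2_N \le t) \lle N\log(N/t)$, valid for $0 < t \le N$. Applying it with $t = \eps^2/4$ and $N \asymp c\log^{r/2}(c/\eps^2)$, and using $\log(N/\eps^2) = \log(c/\eps^2) + O(\log\log(c/\eps^2)) \asymp \log(c/\eps^2)$ to absorb the $\log\log$ factor, produces the claimed bound $c\log^{1+r/2}(c/\eps^2)$ after a harmless rescaling of $\eps$.

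The delicate step is the calibration of $N$: one factor of $\log(c/\eps^2)$ comes from the chi-squared small-ball cost, while the extra $\log^{r/2}$ factor emerges from the ratio $N/c \asymp \log^{r/2}(c/\eps^2)$ forced by the exponential decay of $a_i$. Getting the Laplace approximation of the tail integral exactly right --- in particular the exponent $(N/c)^{2/r}$ and the polynomial prefactor $(N/c)^{1-2/r}$ --- is what makes the logarithmic factor land precisely at $1+r/2$, and this is the main obstacle.
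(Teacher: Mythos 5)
Your argument is correct, but it takes a genuinely different route from the paper. The paper splits off the first $i_0$ coordinates (handled by a product of one--dimensional Gaussian bounds) and then invokes the exact small--deviation asymptotics of Li and Shao for $\PP(\sum a_iZ_i^2\le\eps^2)$: it solves the saddle--point equation for the Laplace parameter $\gamma_a$ via a polylogarithm estimate, obtaining $\gamma_a\asymp(c/\eps^2)\log^{r/2}(c/\eps^2)$, and then bounds $\sum\log(1+2a_i\gamma_a)\lle c\log^{1+r/2}\gamma_a$ by cutting the sum at the index $I=c(\log\gamma_a/C_1)^{r/2}$ where $a_I\gamma_a=1$. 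Your truncation index $N\asymp c\log^{r/2}(c/\eps^2)$ plays exactly the role of that $I$, but you replace the Li--Shao machinery by the elementary combination of Markov's inequality for the tail $\sum_{i\ge N}a_iZ_i^2$ and the crude domination of the head by a $\chi^2_N$ variable together with the standard estimate $-\log\PP(\chi^2_N\le t)\lle N\log(N/t)$; the first $i_0$ coordinates, where only the weak bound $\lambda_i\ge C_1n^{-2}$ is available, are absorbed into the head via $a_i\le1$, which is tidier than the paper's separate treatment. Your route is more self--contained (no citation of the Li--Shao theorem or of the polylogarithm asymptotics) and suffices because only an upper bound on $-\log\PP(\|f\|_n\le\eps\given c)$ is needed here; the paper's approach yields a two--sided asymptotic, which is more than the lemma requires. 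Two small points to tighten: the bound $-\log\PP(\chi^2_N\le t)\lle N\log(N/t)$ degenerates as $t\uparrow N$, so state it for, say, $t\le N/e$ (which is all you use, since $t=\eps^2/4\to0$ while $N\to\infty$); and the remark that $N\le n$ in the intended application is unnecessary --- if $N>n$ the tail is empty and $\PP(\sum_{i<n}Z_i^2\le t)\ge\PP(\chi^2_N\le t)$ anyway, so the lemma remains self--contained.
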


\begin{proof}
Again the series representation of the Gaussian law of $f \given c$ gives 
$\PP(\|f\|_n \leq \eps\given c)=\PP(\sum {e^{-(n/c)^{2/r}\lambda_i} Z^2_i }\leq \eps^2)$,
where the $Z_i$ are independent standard normal random variables. 
By the lower bounds \eqref{eq: lb1}--\eqref{eq: lb2}, 
it follows that 
\begin{align*}
& \PP(\|f\|_n \leq 2\eps\given c)\\
& \qquad \ge \PP\Big(\sum_{i \le i_0} e^{-C_1n^{(2-2r)/r}c^{-2/r}}Z^2_i \le \eps^2\Big)
\PP\Big(\sum_{i \ge 1} e^{-C_1c^{-2/r}i^{2/r}}Z^2_i \le \eps^2\Big).
\end{align*}
The first probability in the last line is bounded from below by 
\[
\Big(\PP(|Z_1| < i_0^{-1/2}e^{({1/2})C_1n^{(2-2r)/r}c^{-2/r}}\eps)\Big)^{i_0}.
\]
Since the quantity on the right of the inequality in this probability becomes 
arbitrarily small under de conditions of the lemma, this is further bounded form below
by a constant times $\eps^{i_0}\exp(i_0 (({1/2})C_1n^{(2-2r)/r}c^{-2/r}))$.

For the second probability we use 
Theorem 6.1 of \cite{li_shao}.
This asserts that if
$a_k>0$ and $\sum a_k < \infty$, then as $\eps \to 0$
\begin{equation}\label{eq: ls}
\PP(\sum a_i Z_i^2 \leq \eps^2) \sim \frac1{\sqrt{4\pi \sum (\frac{a_i \gamma_a}{1+2a_i\gamma_a})^2}}
e^{\eps^2\gamma_a-(1/2) \sum \log(1+2a_i\gamma_a)}, 
\end{equation}
where $\gamma_a=\gamma_a(\eps)$ is uniquely determined, for $\eps>0$ small enough, by the equation
\begin{equation}\label{eq: gamma}
\eps^2=\sum\frac{a_i}{1+2a_i\gamma_a}. 
\end{equation}
We apply \eqref{eq: ls} with $a_i = \exp(-C_1(i/c)^{2/r})$.

We first determine bounds for $\gamma_a$. 
Note that in our case the terms in the sum $S$ on the right
of \eqref{eq: gamma} are decreasing in $i$. It follows that we have the bounds 
\[
\int_1^\infty \frac{1}{e^{C_1(x/c)^{2/r}}+2\gamma_a}\,dx \le S \le
\int_0^\infty \frac{1}{e^{C_1(x/c)^{2/r}}+2\gamma_a}\,dx.
\]
A change of variables shows that the integral on the right equals
\[
 \frac{cr}{2C_1^{r/2}} \int_0^\infty \frac{t^{r/2-1}}{e^t + 2\gamma_a}\,dt = 
c \frac{-r \Gamma(r/2)}{4{\gamma_a}C_1^{r/2}} \text{Li}_{r/2}(-2\gamma_a), 
\]
where Li$_s(z)$ denotes the polylogarithm. 
By \cite{Wood}, 
\[
\frac{\text{Li}_{r/2}(-2\gamma_a)}{\log^{r/2}2\gamma_a} \to - \frac{1}{\Gamma(r/2+1)}
\]
as $\gamma_a \to \infty$. Hence for large $\gamma_a$, we have the upper bound
$S \le \text{const} \times {c{\gamma_a}^{-1}\log^{r/2} \gamma_a}$.
It is easily seen that we have a lower bound of the same order, so that 
\[
\eps^2 \asymp \frac{c\log^{r/2} \gamma_a}{\gamma_a}.
\]
Under our condition that $\eps^2/c \to 0$ this holds if and only if 
\[
\gamma_a \asymp \frac{c}{\eps^2} \log^{r/2}\frac{c}{\eps^2}.
\]
%[PERHAPS WE SHOULD ADD A LEMMA?]
%
%[INDEED: 
%The first inequality can be rewritten as 
%\[
%\frac{1}{\gamma^{2/r}_a} \log \frac1{\gamma^{2/r}_a} \ge - \frac{2}{r}\Big(\frac{\eps^2}{cK_1}\Big)^{2/r}
%\]
%The left-hand side is $ze^z$, with $z= -(2/r)\log {\gamma_a}$.
%Note that $z \le -1$ for large $n$ [CHECK]. Hence, applying the lower branch 
% of the of the Lambert $W$ function on both sides
%we get 
%\[
%\log \frac1{\gamma^{2/r}_a} \le W_{-1}\Big(- \frac{2}{r}\Big(\frac{\eps^2}{cK_1}\Big)^{2/r}\Big).
%\]
%Applying the same reasoning to the other inequality we arrive at the bounds
%\[
%\exp\Big((-\frac r2W_{-1}\Big(- \frac{2}{r}\Big(\frac{\eps^2}{cK_1}\Big)^{2/r}\Big)\Big) \le
%\gamma_a \le \exp\Big((-\frac r2W_{-1}\Big(- \frac{2}{r}\Big(\frac{\eps^2}{cK_2}\Big)^{2/r}\Big)\Big).
%\]
% For $x \downarrow 0$ we have the asymptotic expansion
%\[
%W_{-1}(-x) = \log x - \log(-\log x) + O\Big(\frac{\log x}{\log(-\log x)}\Big).
%\]
%Combining these facts we get
%\[
% K_3 \frac{c}{\eps^2} \log^{r/2}\frac{c}{\eps^2}\le \gamma_a \le K_4 \frac{c}{\eps^2} \log^{r/2}\frac{c}{\eps^2}.
%\]
%]

Next we consider the sums appearing on the right of \eqref{eq: ls}.
To bound $\sum\log(1+2a_i\gamma_a) \le \sum\log(1+2\exp(-C_1(i/c)^{2/r})\gamma_a)$ we 
consider the index $I= c(\log\gamma_a/C_1)^{r/2}$, which is determined such that $ a_I \gamma_a= 1$. 
Note that for $m > 0$, we have $a_{mI}\gamma_a =a_I^{m^{2/r}}\gamma_a = \gamma_a^{1-m^{r/2}}$.
We first split up the sum, writing 
\[
\sum\log(1+2a_i\gamma_a) = \sum_{i < I}\log(1+2a_i\gamma_a)+\sum_{i \ge I}\log(1+2a_i\gamma_a)
\]
The first sum on the right is bounded by a multiple of $I\log\gamma_a$. 
The second one we split into blocks of length $I$. This gives 
\begin{align*}
\sum_{i \ge I}\log(1+2a_i\gamma_a) & \le I \sum_{m \ge 1} \log(1+2\gamma_a^{1-m^{r/2}}) \lle I.
\end{align*}
Hence, we have $\sum\log(1+2a_i\gamma_a) \lle c\log^{1+r/2}\gamma_a$.
For the other sum appearing in \eqref{eq: ls} we have 
\[
\sum \Big(\frac{2a_i \gamma_a}{1+2a_i\gamma_a}\Big)^2\le 
\sum \frac{2a_i \gamma_a}{1+2a_i\gamma_a} = 2\gamma_a \eps^2. 
\]
The proof is completed by combining all the bounds we have found. 
\end{proof}

\begin{lem}
\label{lemma_balls}
Suppose that $f \in H^\beta(C)$ for some $\beta, C > 0$. For $\eps >0$ such that $\eps \to 0$ as $n \to \infty$ and $1/\eps = o(n^{\beta/r})$ and $c > 0$, 
\begin{equation}
\label{infimum2}
\inf_{h \in \HHH^c: \, \|h-f\|_n\le\eps} \|h\|^2_{\HHH^c} \lle
e^{K c^{-2/r}\eps^{-2/\beta}}
\end{equation}
for $n$ large enough, where $K >0 $ is a constant.
\end{lem}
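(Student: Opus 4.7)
The plan is to mimic the argument of Lemma \ref{lemma_balls1}, replacing the polynomial weights by exponential ones, and exploit the fact that the RKHS norm for this prior is
\[
\|h\|^2_{\HHH^c} = \sum_{i=0}^{n-1} e^{(n/c)^{2/r}\lambda_i} h_i^2,
\]
where $h = \sum_i h_i \psi_i$ is the expansion of $h$ in the orthonormal eigenbasis of $L$. So I would first expand $f = \sum_i f_i \psi_i$ and, for an index $I$ to be chosen, set $h = \sum_{i \le I} f_i \psi_i$, a truncation of $f$ that lies in $\HHH^c$ since it has only finitely many non-zero coefficients.

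The approximation error is controlled exactly as in Lemma \ref{lemma_balls1}: the assumption $f \in H^\beta(C)$ translates into $\sum_i (1 + n^{2\beta/r} \lambda_i^\beta) f_i^2 \le C^2$, and combined with the lower bound $\lambda_i \gtrsim (i/n)^{2/r}$ from \eqref{eq: lb2} (valid once $I \ge i_0$, which is ensured for large $n$ by $1/\eps = o(n^{\beta/r})$), we obtain
\[
\|h - f\|_n^2 = \sum_{i > I} f_i^2 \le \frac{C^2}{1 + n^{2\beta/r} \lambda_I^\beta} \lesssim I^{-2\beta/r}.
\]
Choosing $I$ to be a suitable constant multiple of $\eps^{-r/\beta}$ therefore guarantees $\|h - f\|_n \le \eps$.

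For the RKHS norm of $h$, I would plug in the upper bound $\lambda_i \le C_2 (i/n)^{2/r}$ from the geometry condition, which yields $(n/c)^{2/r}\lambda_i \le C_2 (i/c)^{2/r}$ and hence, for every $i \le I$,
\[
(n/c)^{2/r}\lambda_i \le C_2 \Bigl(\frac{I}{c}\Bigr)^{2/r} \lesssim c^{-2/r}\eps^{-2/\beta}.
\]
Factoring out the largest exponential and using $\sum_i f_i^2 = \|f\|_n^2 \le C^2$, we get
\[
\|h\|^2_{\HHH^c} \le e^{C_2(I/c)^{2/r}} \sum_{i \le I} f_i^2 \lesssim e^{K c^{-2/r}\eps^{-2/\beta}},
\]
which is the claimed bound.

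The computation is essentially routine once the right truncation $I \asymp \eps^{-r/\beta}$ is identified; the only mild issue is housekeeping around the index range, namely ensuring $i_0 \le I \le \kappa n$ so that both the lower and upper spectral bounds apply. This is guaranteed by the assumption $1/\eps = o(n^{\beta/r})$ for $n$ large enough, so there is no real obstacle.
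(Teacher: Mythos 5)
Your proof is correct and follows essentially the same route as the paper: truncate $f$ at $I \asymp \eps^{-r/\beta}$, control the approximation error via the smoothness assumption and the lower spectral bound, then bound the RKHS norm by the largest exponential weight $e^{(n/c)^{2/r}\lambda_I}$ times $\|f\|_n^2 \le C^2$, finally applying the upper spectral bound $\lambda_I \le C_2(I/n)^{2/r}$ (you are slightly more careful than the paper here, which cites the lower-bound displays at this step, though the intended content is the same).
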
 

\begin{proof}
We use an expansion $f = \sum f_{i}\psi_{i}$, with $\psi_i$ the orthonormal eigenfunctions of the Laplacian.
We saw in the proof of Lemma \ref{lemma_balls1} that if we define
$h = \sum_{{i} \le I} f_{i}\psi_{i}$ for $I = \text{const} \times \eps^{-r/\beta}$, then
 $\|h - f\|_n \le \eps$.
By \eqref{eq: norms}, the RKHS-norm of $h$ satisfies in this case
\begin{align*}
\|h\|^2_{\HHH^c} & = \sum_{i \le I} e^{(n/c)^{2/r}\lambda_i}f_i^2 \le C^2e^{(n/c)^{2/r}\lambda_I}. 
\end{align*}
The condition on $\eps$ ensures that for the choice of $I$ made above and $n$ large enough, 
$i_0 \le I \le \kappa n$. Hence, by \eqref{eq: lb1}--\eqref{eq: lb2}, 
$\|h\|_{\HHH^c}^2$ is bounded by a constant times 
the right-hand side of \eqref{infimum2}. 
\end{proof}

\subsubsection{Proof of \eqref{remaining_mass}--\eqref{entropy}}

Define $B_n:= M_n \HHH_1^{c_n} +\eps_n\BBB_1$, where $\eps_n$
is as above and $M_n$ and $c_n$ are determined below.

For \eqref{remaining_mass} we first note again that 
\[
\PP(f \notin B_n)\leq \int_0^{c_n}\PP(f \notin M_n \HHH_1^{c_n} +\eps_n\BBB_1 \given c) e^{-c}\,dc
+\int_{c_n}^\infty e^{-c}\,dc.
\]
Exactly as in the proof of \eqref{remaining_mass1}, the Borell--Sudakov inequality
implies that for $c\leq c_n$, 
\begin{align*}
\PP(f \not \in B_n\given c) \le 1-\Phi(\Phi^{-1}(\PP(\|f\|_n \le {\eps_n}\given c_n)+M_n)).
\end{align*}
By Lemma \ref{lemma_small} the small ball probability on the right is lower bounded by 
\[
\exp\Big(-Kc_n \log^{1+r/2} \frac{c_n}{\eps_n^2}\Big).
\]
It follows that for $c \le c_n$, 
\[
\PP(f \not \in B_n\given c) \le 1 - \Phi\Big(M_n-K' \sqrt{c_n \log^{1+r/2} \frac{c_n}{\eps_n^2}}\Big)
\]
for some $K'>0$. For a given $K_2 > 0$, choosing $M_n$ a large multiple of 
$(c_n \log^{1+r/2} ({c_n}/{\eps_n^2}))^{1/2}$
we find that, for large $n$, 
\[
\PP(f \not \in B_n) \le e^{-K''{c_n \log^{1+r/2} \frac{c_n}{\eps_n^2}} } + e^{-c_n} \le 2e^{-c_n}.
\]
If $K_2 > 0$ is a given constant, then for $c_n$ a large enough multiple of $n\eps_n^2$, this is bounded by 
$\exp(-K_2n\eps_n^2)$.

For these choices of $M_n$ and $c_n$, Lemma \ref{lem_entropy} implies that the entropy satisfies, for any $\tilde \eps_n \ge \eps_n$, 
\[
\log N(2\tilde\eps_n, B_n, \|\cdot\|_n) \le \log N(2\eps_n, B_n, \|\cdot\|_n) 
 \lle
c_n \left(\log \frac{M_n} {\eps_n}\right)^{1+r/2}.
\]
This proves that \eqref{entropy} holds for 
$\tilde \eps_n = \eps_n\log^{1/2+r/4}n$.

\begin{lem}
\label{lem_entropy}
Let $\eps, c > 0$ be such that $c\log^{r/2}(1/\eps) \to \infty$ as $n \to \infty$.
Then for $n$ large enough, 
\[
\log N(\eps, \HHH_1^c, \|\cdot\|_n) \lle c\log^{1+r/2}\Big(\frac1\eps\Big). 
\]
\end{lem}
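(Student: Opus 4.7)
The plan is to identify $\HHH_1^c$ with an explicit ellipsoid in $\RR^n$ via the Laplacian eigenbasis, use the geometry assumption to replace the Laplacian eigenvalues by powers of $i/n$, and then carry out a two-scale covering argument in which a low-frequency block is covered by Euclidean $\eps$-balls while the high-frequency tail is absorbed into the $\eps$ resolution.

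More concretely, I would first write a generic element as $h = \sum_{i} h_i \psi_i$; since the $\psi_i$ are $\|\cdot\|_n$-orthonormal, $\|h\|_n^2 = \sum_i h_i^2$, while the RKHS norm identity \eqref{eq: norms} and the exponential form of $\mu_i$ give
\[
\HHH_1^c = \Bigl\{h \in \RR^n : \sum_{i=0}^{n-1} e^{(n/c)^{2/r}\lambda_i}\, h_i^2 \le 1\Bigr\}.
\]
In particular, every exponential factor is $\ge 1$, so $\HHH_1^c$ is contained in the unit Euclidean ball of $\RR^n$. Using the lower bounds \eqref{eq: lb1}--\eqref{eq: lb2} (upgraded to hold for all $i \ge i_0$ as explained after the geometry condition), I would then enlarge the ellipsoid to
\[
\HHH_1^c \subset \Bigl\{h : \sum_{i \ge i_0} e^{C_1 (i/c)^{2/r}}\, h_i^2 \le 1\Bigr\} \cap \{h : \|h\|_n \le 1\}.
\]

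The key step is to choose the threshold $I_\ast$ via $C_1 (I_\ast/c)^{2/r} = 2\log(1/\eps)$, i.e.\ $I_\ast \asymp c \log^{r/2}(1/\eps)$. For $i > I_\ast$ the exponential weight exceeds $1/\eps^{2}$, so
\[
\sum_{i > I_\ast} h_i^2 \;\le\; \eps^{2} \sum_{i > I_\ast} e^{C_1 (i/c)^{2/r}} h_i^2 \;\le\; \eps^{2},
\]
meaning the tail of any element of $\HHH_1^c$ is within distance $\eps$ of $0$ in $\|\cdot\|_n$ and may be ignored at resolution $\eps$. For $i \le I_\ast$ the coefficients still satisfy $\sum h_i^2 \le 1$, so the low-frequency block lies in the Euclidean unit ball of dimension at most $I_\ast + 1$, whose $\eps$-covering number is bounded by $(3/\eps)^{I_\ast + 1}$ by a standard volume argument. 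Combining the two pieces yields a $2\eps$-net of $\HHH_1^c$ of logarithmic size
\[
\log N(2\eps, \HHH_1^c, \|\cdot\|_n) \;\lesssim\; I_\ast \log(3/\eps) \;\lesssim\; c\, \log^{1+r/2}(1/\eps),
\]
which is the desired bound (after replacing $2\eps$ by $\eps$ and adjusting constants).

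The main obstacles, none of them deep but all bookkeeping, are: checking that $I_\ast$ falls in the regime $\{i_0, \dots, \kappa n\}$ where the geometry lower bound is valid (which is precisely why the hypothesis $c\log^{r/2}(1/\eps) \to \infty$ appears, and large $I_\ast > n$ is harmless since in that regime the trivial bound $\log N \lesssim n\log(1/\eps)$ is already dominated by $c\log^{1+r/2}(1/\eps)$); absorbing the finitely many coefficients with $i < i_0$, whose contribution to the entropy is $O(\log(1/\eps))$ and therefore negligible; and keeping the constants in the two-scale covering independent of $n$ so the estimate $c\log^{1+r/2}(1/\eps)$ has a uniform implicit constant, which can be verified directly since the only inputs are $C_1$, $i_0$, $\kappa$ from the geometry condition and the absolute constant $3$ in the Euclidean covering bound.
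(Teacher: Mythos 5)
Your proposal is correct and follows essentially the same route as the paper's proof: the threshold $I_\ast \asymp c\log^{r/2}(1/\eps)$ is exactly the paper's choice $I=(2/C_1)^{r/2}c\log^{r/2}(1/\eps)$, the tail beyond $I_\ast$ is absorbed into the $\eps$ resolution by the same weighting argument, and the low-frequency block is covered as a Euclidean unit ball in dimension $I_\ast$, yielding $\log N \lle I_\ast\log(1/\eps)$. The extra bookkeeping you flag (the $i<i_0$ coefficients, the regime $I_\ast>n$) is handled correctly and only adds detail beyond what the paper spells out.
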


\begin{proof}
We need to bound the metric entropy of the set 
\[
A = \{x \in \RR^n: \sum\limits_{i=0}^{n-1}e^{(n/c)^{2/r}\lambda_i}x_{i}^2 \le 1\}, 
\]
relative to the Euclidean norm $\|\cdot\|$. Set 
$I = (2/{C_1})^{r/2} c\log^{r/2}(1/\eps)$.
Under the assumption of the lemma this is larger than $i_0$, hence by \eqref{eq: lb1}--\eqref{eq: lb2}
we have $\exp(-(n/c)^{2/r}\lambda_I) \le \eps^2$. 
It follows that if for $x \in A$ we define the projection $x^I$ by 
$x^I = (x_1, \ldots, x_I, 0, 0, \ldots)$, then 
\[
\|x - x^I\|^2 = \sum_{i > I} x_i^2 \le e^{-(n/c)^{2/r}\lambda_I} \sum_{i> I}e^{(n/c)^{2/r}\lambda_i}x_{i}^2 \le \eps^2.
\]
Moreover, we have $\|x^I\| \le 1$.
By the triangle inequality, it follows that if the points $x_1, \ldots, x_N$ form an $\eps$-net for 
the unit ball in $\RR^I$, 
then the points $\bar x_1, \ldots, \bar x_N$ in $\RR^n$ obtained by appending zeros to the $x_j$ form
a $2\eps$-net for $A$. Hence, $N(2\eps, A, \|\cdot\|) \lle \eps^{-I}$. 
The proof is completed by recalling the expression for $I$.
\end{proof}

\section{Function estimation on graphs}
\label{sec: stat}

In this section we translate the general Theorems \ref{main_theorem} and \ref{main_theorem2}
into results about posterior contraction in nonparametric regression and binary 
classification problems on graphs. Since the arguments needed for this translation are 
very similar to those in earlier papers, we omit full proofs and just give pointers to the literature.

\subsection{Nonparametric regression}

As before we let $G$ be a connected simple undirected graph with vertices $1, 2, \ldots, n$. 
In the regression case we assume that we have observations $Y_1, \ldots, Y_n$ at the vertices
of the graph, satisfying
\begin{equation}\label{eq: reg}
Y_i = f_0(i) + \eps_i, 
\end{equation}
where $f_0$ is the function on $G$ that we want to make inference about and $\eps_i$ are independent 
$N(0,\sigma^2)$-distributed error variables, for some $\sigma > 0$.
We assume that the underlying graph satisfies the geometry assumption with some parameter $r \ge 1$. 
As prior on the regression function $f$ we then employ one of the two priors described by 
\eqref{eq: c1}--\eqref{eq: f1} or \eqref{eq: c2}--\eqref{eq: f2}. If $\sigma$ is unknown, we 
assume it belongs to a compact interval $[a,b] \subset (0,\infty)$ and endow it with a prior 
with a positive, continuous density on $[a,b]$, independent of the prior on $f$. 

For a given prior $\Pi$, the corresponding posterior distribution on $f$ is denoted by 
$\Pi(\cdot \given Y_1, \ldots, Y_n)$. For a sequence of positive numbers $\eps_n \to 0$
we say that the posterior contracts around $f_0$ at the rate $\eps_n$ if for all large enough $M> 0$, 
\[
\Pi(f: \|f-f_0\|_n \ge M\eps_n \given Y_1, \ldots, Y_n) \overset{P_{f_0}}{\to} 0
\]
as $n \to \infty$. Here the convergence is in probability under the law $P_{f_0}$ 
corresponding to the data generating model \eqref{eq: reg}.

The usual arguments allow us to derive the following statements from Theorems 
\ref{main_theorem} and \ref{main_theorem2}. See, for instance, \cite{vdVvZ}
or \cite{rene} for details.

\begin{thm}[Nonparametric regression]
\label{regression_theorem}
Suppose the geometry assumption holds for $r \ge 1$. 
Assume that $f_0 \in H^\beta(C)$ for $\beta, C > 0$. 
\begin{enumerate}[(i)]
\item (Power of the Laplacian.) If the prior on $f$ is given by \eqref{eq: c1}--\eqref{eq: f1} for $\alpha > 0$ 
and $\beta \le \alpha + r/2$, then the posterior contracts around $f_0$ at the rate 
$n^{-\beta/(r+2\beta)}$.

\item (Exponential of the Laplacian.) If the prior on $f$ is given by \eqref{eq: c2}--\eqref{eq: f2}, then the posterior contracts around $f_0$ at the rate $n^{-\beta/(r+2\beta)}\log^\kappa n$ for some $\kappa > 0$.
\end{enumerate}
\end{thm}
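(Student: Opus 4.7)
The plan is to reduce Theorem \ref{regression_theorem} to the standard posterior contraction theorem for independent, non-identically distributed observations with Gaussian process priors; see \cite{vdVvZ} or \cite{rene} for convenient formulations. The three assertions \eqref{prior_mass1}--\eqref{entropy1} of Theorem \ref{main_theorem} (and analogously \eqref{prior_mass}--\eqref{entropy} of Theorem \ref{main_theorem2}) are exactly the three inputs such a theorem requires, so the remaining work is to translate between the Kullback--Leibler/testing language of the general contraction theorem and the $\|\cdot\|_n$-language of our graph setting.

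My first step is to observe that the intrinsic pseudo-metric of the experiment is, up to constants, $\|\cdot\|_n$ itself. Indeed, under \eqref{eq: reg} with $\sigma$ known, writing $P_f = \bigotimes_{i=1}^n N(f(i),\sigma^2)$, one computes directly
\[
K(P_{f_0},P_f) \;=\; \frac{n}{2\sigma^2}\,\|f-f_0\|_n^2,
\qquad
\mathrm{Var}_{f_0}\!\Big(\log\tfrac{dP_{f_0}}{dP_f}\Big) \;=\; \frac{n}{\sigma^2}\,\|f-f_0\|_n^2,
\]
so that the Kullback--Leibler and second-moment neighbourhoods around $f_0$ coincide, up to fixed constants, with $\|\cdot\|_n$-balls. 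Exponentially powerful likelihood-ratio tests between $f_0$ and any $\|\cdot\|_n$-separated alternative exist in the Gaussian regression model, and the usual argument combines point tests into a global sieve test whenever the $\|\cdot\|_n$-entropy of the sieve is at most $n\eps^2$. Consequently, the three conditions of the general theorem become (i) a prior mass condition on $\|\cdot\|_n$-balls, (ii) a negligible prior mass condition on the complement of a sieve, and (iii) an $\|\cdot\|_n$-entropy bound on the sieve; these are supplied verbatim by \eqref{prior_mass1}, \eqref{remaining_mass1}, \eqref{entropy1} (with $K_2>1$ chosen large enough) for part (i) at rate $\eps_n = n^{-\beta/(r+2\beta)}$, and by \eqref{prior_mass}, \eqref{remaining_mass}, \eqref{entropy} for part (ii) at the larger rate $\tilde\eps_n = \eps_n \log^{1/2+r/4}n$, which is of the stated form $n^{-\beta/(r+2\beta)}\log^\kappa n$ for an explicit $\kappa>0$ computed from the definitions of $\eps_n$ and $\tilde\eps_n$.

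Finally, I would extend to unknown $\sigma \in [a,b]$ in the routine way: the constants in the displayed Kullback--Leibler and variance identities are uniformly bounded above and below for $\sigma \in [a,b]$, the positive continuous density of the prior on $\sigma$ contributes only a uniformly positive factor to the prior mass of any Kullback--Leibler neighbourhood, and independence between the priors on $f$ and $\sigma$ lets one multiply the prior mass and sieve bounds through, exactly as in Section 3 of \cite{vdVvZ}. The only genuine bookkeeping point is that both priors on $f$ are $n$-dependent (triangular-array character), but Theorems \ref{main_theorem}--\ref{main_theorem2} are already phrased in this setting, so no extra care is needed. The substantive content of the proof is thus entirely absorbed by those two theorems; the main ``difficulty'' here is essentially notational, and the present argument is a direct translation.
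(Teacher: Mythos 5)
Your argument is correct and follows essentially the same route the paper intends: the paper's own "proof" of Theorem \ref{regression_theorem} consists of the single sentence pointing to \cite{vdVvZ} and \cite{rene} and to Theorems \ref{main_theorem} and \ref{main_theorem2}, and your write-up simply fills in the translation between $\|\cdot\|_n$-balls and Kullback--Leibler neighbourhoods (including the exact KL and variance identities, the two-rate use of $(\eps_n,\tilde\eps_n)$ in part (ii), and the uniformity over $\sigma\in[a,b]$) that the paper leaves to the reader.
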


Observe that since the priors do not use knowledge of the regularity $\beta$ 
of the regression function, we obtain rate-adaptive results. For the power prior the 
range of regularities that we can adapt to is bounded by $\alpha + r/2$, where $\alpha$
is the hyper parameter describing the ``baseline regularity'' of the prior. In the 
case of the exponential prior the range is unbounded. This comes at the modest cost 
of having an additional logarithmic factor in the rate. 

In \cite{minimax}  minimax lower
bounds are presented which complement the rate results of the present paper.
These show that the rates obtained are sharp 
in the present setting (up to a logarithmic factor in the exponential case). 
For the regular grid case this is basically also clear from existing lower bound results,  
since our setup includes the regular grids (Example \ref{ex: grids}) and since 
our smoothness condition corresponds to ordinary Sobolev regularity in those cases (Example \ref{ex: pathgraph}).

\subsection{Nonparametric classification}

We can derive the analogous results in the classification problem in which we assume that the data $Y_1, \ldots, Y_n$ are 
independent $\{0, 1\}$-valued variables, observed at the vertices of the graph. 
In this case the goal is to estimate the binary regression function $p_0$, or ``soft label function'' on the 
graph, given by 
\[
p_0(i) = \PP_0(Y_i = 1). 
\]
We consider priors on $p$ constructed by first defining a prior on a real-valued function 
$f$ by \eqref{eq: c1}--\eqref{eq: f1} or \eqref{eq: c2}--\eqref{eq: f2} and then setting 
$p = \Psi(f)$, where $\Psi: \RR \to (0, 1)$ is a suitably chosen link function.
We will assume that $\Psi$ is a strictly increasing, differentiable function onto $(0,1)$ such that 
${\Psi'}/({\Psi(1-\Psi)})$ is uniformly bounded. Note that for instance the 
sigmoid, or logistic link $\Psi(f) = 1/(1+\exp(-f))$ satisfies this condition. 
Under our conditions the inverse $\Psi^{-1}: (0,1)\to \RR$ is well defined. In this classification setting
the regularity condition will be formulated in terms of $\Psi^{-1}(p_0)$. This is natural, 
since the prior is defined in terms of $\Psi^{-1}(p)$ as well. 
Also in this case we denote the posterior corresponding to a prior $\Pi$ by 
$\Pi(\cdot \given Y_1, \ldots, Y_n)$ and we say that 
 the posterior contracts around $p_0$ at the rate $\eps_n$ if for all large enough $M> 0$, 
\[
\Pi(p: \|p-p_0\|_n \ge M\eps_n \given Y_1, \ldots, Y_n) \overset{P_{0}}{\to} 0
\]
as $n \to \infty$. 

To derive the following result from Theorems \ref{main_theorem} and \ref{main_theorem2} 
we can argue along the lines of the proof of Theorem 3.2 of \cite{vdVvZ}. 
Some adaptations are required, since in the present case we have fixed design points. However, the 
necessary modifications are straightforward and therefore omitted. 

\begin{thm}[Classification]
\label{label_theorem}
Suppose the geometry assumption holds for $r \ge 1$. 
Let $\Psi: \RR \to (0,1)$ be onto, strictly increasing, differentiable and suppose that 
${\Psi'}/({\Psi(1-\Psi)})$ is uniformly bounded.
Assume that $\Psi^{-1}(p_0) \in H^\beta(C)$ for $\beta, C > 0$. 
\begin{enumerate}[(i)]
\item (Power of the Laplacian.) If the prior on $p$ is given by the law of $\Psi(f)$, for $f$ given by 
\eqref{eq: c1}--\eqref{eq: f1} for $\alpha > 0$ 
and $\beta \le \alpha + r/2$, then the posterior contracts around $p_0$ at the rate 
$n^{-\beta/(r+2\beta)}$.

\item (Exponential of the Laplacian.) If the prior on $p$ is given by the law of $\Psi(f)$, for $f$ given by 
\eqref{eq: c2}--\eqref{eq: f2}, then the posterior contracts around $f_0$ at the rate $n^{-\beta/(r+2\beta)}\log^\kappa n$ for some $\kappa > 0$.
\end{enumerate}
\end{thm}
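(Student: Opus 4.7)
The plan is to deduce Theorem \ref{label_theorem} from Theorems \ref{main_theorem} and \ref{main_theorem2} by feeding the three estimates \eqref{prior_mass1}--\eqref{entropy1} (respectively \eqref{prior_mass}--\eqref{entropy}) into the general posterior contraction theorem for independent, non-identically distributed observations, along the lines of the proof of Theorem 3.2 of \cite{vdVvZ}. Set $f_0 := \Psi^{-1}(p_0)$, which lies in $H^\beta(C)$ by hypothesis. Since $\Psi$ is onto $(0,1)$ and $\Psi'/(\Psi(1-\Psi))$ is uniformly bounded, $\Psi'$ is uniformly bounded as well, so $\Psi$ is globally Lipschitz with some constant $M$. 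The same boundedness gives that for the Bernoulli model the pointwise Kullback--Leibler divergence and the variance of the log-likelihood ratio between $\mathrm{Bern}(\Psi(f_0(i)))$ and $\mathrm{Bern}(\Psi(f(i)))$ are each dominated by a multiple of $(f(i)-f_0(i))^2$, uniformly in $i$; averaging over vertices yields
\[
\frac1n\sum_{i=1}^n K_i(f,f_0) \lle \|f-f_0\|_n^2, \qquad \frac1n\sum_{i=1}^n V_i(f,f_0) \lle \|f-f_0\|_n^2,
\]
where $K_i$ and $V_i$ denote the pointwise KL divergence and the pointwise variance of the log-likelihood ratio.

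With these in hand I would verify the three ingredients of the contraction theorem at rate $\eps_n$ (respectively $\tilde\eps_n$). The prior mass condition follows from \eqref{prior_mass1} or \eqref{prior_mass} combined with the displayed averaged KL bounds, because the event $\{\|f-f_0\|_n<\eps_n\}$ is then contained in the required average-KL neighbourhood of $p_0$. For the sieves on the $p$-side I take $\widetilde B_n := \Psi(B_n)$. The $M$-Lipschitzness of $\Psi$ gives $N(M\eta,\widetilde B_n,\|\cdot\|_n)\le N(\eta,B_n,\|\cdot\|_n)$, so \eqref{entropy1} or \eqref{entropy} transfers to an entropy bound on $\widetilde B_n$ up to a harmless constant in the radius, and $\{p\notin \widetilde B_n\} = \{f\notin B_n\}$ gives $\PP(p\notin \widetilde B_n) \le e^{-K_2 n\eps_n^2}$. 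Plugging these three estimates into the contraction theorem produces posterior contraction at rate $\eps_n$ in the average Hellinger semimetric $d_n(p,q)^2 := \frac1n\sum_i h^2(\mathrm{Bern}(p(i)),\mathrm{Bern}(q(i)))$. The elementary equivalence $h^2(\mathrm{Bern}(s),\mathrm{Bern}(t)) \asymp (s-t)^2$ on compact subsets of $(0,1)$, together with the fact that $p=\Psi(f)$ for $f\in B_n$ takes values in a fixed compact subset of $(0,1)$ (again by the boundedness of $\Psi'/(\Psi(1-\Psi))$), converts this into contraction in $\|\cdot\|_n$ at the same rate.

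The main obstacle is that off-the-shelf posterior contraction theorems for classification are typically phrased for i.i.d.\ sampling from a covariate distribution, whereas here the design is fixed and the natural norm is the empirical $\|\cdot\|_n$. As the excerpt notes, the required modifications are essentially bookkeeping: throughout the construction of tests and the Schwartz-type argument one replaces $L^2(Q)$ with $\|\cdot\|_n$ and the usual Hellinger with $d_n$ above, while the three probabilistic inputs (prior mass, remaining mass, entropy) have already been established in Theorems \ref{main_theorem} and \ref{main_theorem2} in exactly the data-free form this bookkeeping requires.
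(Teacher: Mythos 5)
Your overall route --- Bernoulli Kullback--Leibler and variance bounds dominated by $\|f-f_0\|_n^2$, transfer of the sieve and its entropy through $\Psi$, and the general contraction theorem for independent non-i.i.d.\ observations with the average Hellinger semimetric $d_n$ --- is exactly the route the paper intends (it only points to the proof of Theorem 3.2 of \cite{vdVvZ} and omits the fixed-design bookkeeping), and your prior-mass and remaining-mass steps are fine. There is, however, one genuinely false intermediate claim: that $p=\Psi(f)$ for $f\in B_n$ takes values in a fixed compact subset of $(0,1)$ ``by the boundedness of $\Psi'/(\Psi(1-\Psi))$''. That hypothesis controls the derivative of $\Psi$, not the range of $f$ on the sieve; since $B_n=M_n\HHH_1^{c_n}+\eps_n\BBB_1$ with $M_n\to\infty$, the sieve contains functions whose values are unbounded in $n$, so $\Psi(f)$ comes arbitrarily close to $0$ and $1$ on $B_n$. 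As written, both places where you lean on this claim --- passing from the $\|\cdot\|_n$-entropy of $\Psi(B_n)$ to the $d_n$-entropy required for the tests, and converting $d_n$-contraction back into $\|\cdot\|_n$-contraction --- are unsupported, because $h^2(\mathrm{Bern}(s),\mathrm{Bern}(t))\asymp(s-t)^2$ fails near the boundary of $[0,1]$ (there $h^2\asymp|s-t|$).

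Both steps are repairable, and the repairs are precisely where the hypothesis on $\Psi$ is meant to enter. For the entropy: do not pass through $\|p-q\|_n$ at all; note instead that $\Psi'/\sqrt{\Psi}=\bigl(\Psi'/(\Psi(1-\Psi))\bigr)\sqrt{\Psi}\,(1-\Psi)$ and likewise $\Psi'/\sqrt{1-\Psi}$ are uniformly bounded, so $\sqrt{\Psi}$ and $\sqrt{1-\Psi}$ are Lipschitz on $\RR$ and hence $d_n(\Psi(f),\Psi(g))\lle\|f-g\|_n$ directly on the latent scale; an $\eta$-net of $B_n$ for $\|\cdot\|_n$ is then a $C\eta$-net of $\Psi(B_n)$ for $d_n$, and \eqref{entropy1}/\eqref{entropy} transfer with no compactness needed. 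For the final conversion: the inequality you actually need is the lower bound
\[
h^2(\mathrm{Bern}(s),\mathrm{Bern}(t))=\frac{(s-t)^2}{(\sqrt s+\sqrt t)^{2}}+\frac{(s-t)^2}{(\sqrt{1-s}+\sqrt{1-t})^{2}}\ \ge\ \tfrac12(s-t)^2,
\]
valid for all $s,t\in[0,1]$ because $\sqrt s+\sqrt t\le 2$ and $\sqrt{1-s}+\sqrt{1-t}\le 2$; hence $d_n(p,p_0)\ge\|p-p_0\|_n/\sqrt2$ universally and contraction in $d_n$ yields contraction in $\|\cdot\|_n$ without any boundedness-away-from-$\{0,1\}$ argument. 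With these two substitutions your proof is complete and coincides with the paper's intended argument.
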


\section{Concluding remarks}
\label{sec: conc}

We have introduced a framework for studying the performance 
of methods for nonparametric function estimation on large graphs. 
We have proposed assumptions on the geometry of the underlying graph 
and the regularity of the function formulated in terms of the 
Laplacian of the graph. Moreover, we have exhibited nonparametric Bayes
methods that achieve good convergence rates and that adapt to the 
unknown regularity of the function of interest.

We have purposely focused on the building up a new framework and 
deriving a few representative results within that framework and have 
not yet attempted to explore every possible extension. 
As a result, extensions and generalizations are possible in a variety of directions.

First of all, it is of interest to study other procedures than 
just the Bayesian methods with priors \eqref{eq: c1}--\eqref{eq: f1} or \eqref{eq: c2}--\eqref{eq: f2}.
For instance, empirical Bayes procedures for choosing the hyperparameter $c$ 
might computationally be more favorable than hierarchical Bayes. Studying the performance of such procedures 
is possible within the framework of \cite{rousseau2015asymptotic}.
In turn, having results for empirical Bayes will 
allow us to extend the range of priors on $c$ for which we can prove that the hierarchical 
procedures give good results. 

Secondly, results on uncertainty quantification 
would be valuable. Bayes procedures provide a natural method for quantifying uncertainty 
through the spread of the posterior distribution. However, it has become clear
that in general the question of whether or not Bayesian credible sets can be interpreted as (frequentist) confidence
sets is a delicate matter in nonparametric settings (e.g.\ \cite{conf}).
It would be desirable to have more insight in this issue in the graph setting.

On the level of the geometry assumption, several extensions might be of interest.
For instance, instead of a single parameter $r$ governing the ``dimension''
of the graph it might be interesting to consider frameworks allowing graphs
which are less homogenous. When estimating a function on some sub-region of 
a graph, one would expect that the rates should only depend on the local geometry 
of the graph in that region. It would be of interest to make such statements
mathematically precise and to exhibit procedures with good local properties. 
More generally, recent numerical work has shown that Bayesian Laplacian regularisation 
can work quite  well in practice on graphs that do not satisfy our geometry assumption, 
see \cite{Jarno}. To understand this theoretically our current mathematical results are too
limited.

A final possible generalization that we want to mention is to the setting of 
 weighted graphs. This is of interest, since in many applications it is 
 natural to work with weighted graphs to quantify the similarity between vertices. 
 We expect that with additional work our results can be extended to that setting.

\bigskip

\newpage

\bibliographystyle{harry}
\bibliography{pathgraph}

\end{document}